\documentclass{amsart}
\usepackage{graphicx}
\usepackage{amsmath,amssymb}
\vfuzz2pt 
\hfuzz2pt 
\newtheorem{thm}{Theorem}[section]
\newtheorem{cor}[thm]{Corollary}
\newtheorem{lem}[thm]{Lemma}
\newtheorem{prop}[thm]{Proposition}
\theoremstyle{definition}

\theoremstyle{remark}

\numberwithin{equation}{section}

\begin{document}

\title[On the integral  functional equations]{On the integral d'Alembert's and
 Wilson's functional equations }
\author[B. Bouikhalene and  E. Elqorachi]{Bouikhalene Belaid and   Elqorachi Elhoucien}


\begin{abstract} Let $G$ be a locally compact group, and let $K$ be a compact subgroup of $G$. Let $\mu : G\longrightarrow\mathbb{C}\backslash\{0\}$ be a  character of  $G$. In this paper, we deal with the integral equations
 $$W_{\mu}(K):\; \;\int_{K}f(xkyk^{-1})dk+\mu(y)\int_{K}f(xky^{-1}k^{-1})dk=2f(x)g(y),$$
 and
 $$D_{\mu}(K):\; \;\int_{K}f(xkyk^{-1})dk+\mu(y)\int_{K}f(xky^{-1}k^{-1})dk=2f(x)f(y)$$
 for all $x, y\in G$ where $f, g: G\longrightarrow \mathbb{C}$, to be determined, are complex continuous functions on $G$.
 When $K\subset Z(G)$, the center of $G$, $D_{\mu}(K)$ reduces to the new version of d'Almbert's functional equation
 $f(xy)+\mu(y)f(xy^{-1})=2f(x)f(y)$, recently studied by Davison [18] and Stetk{\ae}r [35].
  We derive the following link between the solutions of $W_{\mu}(K)$ and $D_{\mu}(K)$ in the following way :
  If $(f,g)$ is a solution of equation $W_{\mu}(K)$ such  that $C_{K}f=\int_{K}f(kxk^{-1})d\omega_{K}(k)\neq 0$ then $g$ is a solution of $D_{\mu}(K)$. This result is used to establish the superstability problem of $W_{\mu}(K)$.  In the case where $(G,K)$ is a central pair, we show that the solutions are expressed by means of $K$-spherical functions and related functions. Also we give  explicit formulas  of solutions of $D_{\mu}(K)$ in terms of irreducible representations of $G$. These formulas generalize Euler's formula $\cos(x)=\frac{e^{ix}+e^{-ix}}{2}$ on $G=\mathbb{R}$.
\end{abstract}
\maketitle
\section{Introduction and Preliminaries}
\subsection{}
Throughout this paper, $G$ will be a locally compact group, $K$ be a
compact subgroup of $G$ and $dk$ the normalized Haar measure of the
compact group $K$. The unit element of $G$ is denoted by $e$. The
center of $G$ is dented by $Z(G)$. For any function $f$ on $G$
we define the function $\check{f}(x)=f(x^{-1})$ for any $x\in G$. The space of all complex continuous functions on $G$ having compact support is designed by $\mathcal{K}(G)$.  We denote by $\mathcal{C}(G)$ the space of all complex continuous functions on $G$. For each fixed $x\in G$, we define the left translation operator by $(L_{x}f)(y)=f(x^{-1}y)$ for all $y\in G$.\\
For a given character  $\mu :G \longrightarrow
\mathbb{C}\backslash\{0\}$ we consider the following integral
equation
\begin{equation}\label{eq11}
 \int_{K}f(xkyk^{-1})dk+\mu(y)\int_{K}f(xky^{-1}k^{-1})dk=2f(x)g(y),\;\;x,y\in G.
\end{equation}
This equation is a generalization of the following  functional
equations :
\begin{equation}\label{eq12}
 \int_{K}f(xkyk^{-1})dk+\mu(y)\int_{K}f(xky^{-1}k^{-1})dk=2f(x)f(y),\;\;x,y\in G,
\end{equation}
 which was studied in [7] when $\mu=1$ and $(G,K)$ is a central pair. \\
If $K\subset Z(G)$ the subgroup center of $G$ and $f=g$,  (1.1)
becomes d'Alembert's functional equation
\begin{equation}\label{eq14}
 f(xy)+\mu(y)f(xy^{-1})=2f(x)f(y),\;\;x,y\in G.
\end{equation}
In the case where $\mu=1$, many authors studied the functional equation (1.3) (see [3], [15], [16], [17], [26], [29], [30], [31], [32], [33], [34], [35], [36], [37], [38], [49]).\\
When $f(kxh)=f(x)$ for any $x\in G$ and $k, h\in K$, we obtain the
functional equation
\begin{equation}\label{eq12}
 \int_{K}f(xky)dk+\int_{K}f(xky^{-1})dk=2f(x)g(y),\;\;x,y\in G.
\end{equation}
 If $K\subset Z(G)$, (1.1) reduces to the following version of Wilson's functional equation
\begin{equation}\label{eq13}
 f(xy)+\mu(y)f(xy^{-1})=2f(x)g(y),\;\;x,y\in G.
\end{equation}
IF $K\subset Z(G)$ and $\mu=1$, (1.1) becomes the Wilson's
functional equation
\begin{equation}\label{eq14}
 f(xy)+f(xy^{-1})=2f(x)g(y),\;\;x,y\in G.
\end{equation}
If $f(xk)=\overline{\chi(k)}f(x)$, where $x\in G$, $k\in K$ and
$\chi$ is a unitary character of $K$ we obtain the functional
equation
\begin{equation}
 \int_{K}f(xky)\overline{\chi(k)}k+\mu(y)\int_{K}f(xky^{-1})\overline{\chi(k)}dk=2f(x)g(y),\;\;x,y\in G.
\end{equation}
If $G$ is compact we can take $K=G$ and consider the functional
equation
\begin{equation}
 \int_{G}f(xtyt^{-1})dt+\mu(y)\int_{G}f(xty^{-1}t^{-1})dt=2f(x)f(y),\;\;x,y\in G.
\end{equation}
The equations (1.4), (1.7) and (1.8) were  studied  in [1], [7],
[9], [10] and [21].
 The  functional equation (1.6) appeared in several works by
H. Stetk\ae r, see for example [31], [32] and [33]. For equation
(1.3), we refer  to the recent studies by Davison [18] and  Stetk\ae
r [36].
 \subsection{Recall on the central pairs }
For a function $f$ on $G$, we say that the function $f$ is
$K$-central if $f(kx)=f(xk)$ for all $k\in K$ and for all $x\in G$.
We put $\mathcal{K}_{K}(G)=\{ f\in \mathcal{K}(G) : f(kx)=f(xk),
x\in G, k\in K\}$. Under convolution, denoted $\star$,
$\mathcal{K}_{K}(G)$ is a subalgebra of the algebra
$\mathcal{K}(G)$. We recall (see [7]) that the pair $(G,K)$ is said
to be a central pair if the algebra
($\mathcal{K}_{K}(G),\star)$ is commutative.\\
A non-zero continuous function $\varphi$ on $G$ is called
$K$-spherical function, if
\begin{equation}
 \int_{K}\varphi(xkyk^{-1})dk=\varphi(x)\varphi(y),  x, y\in G.
\end{equation}
for all $x, y\in G$. We will say that a function $f\in
\mathcal{C}(G)$ satisfying
\begin{equation}
 \int_{K}f(xkyk^{-1})dk=f(x)\varphi(y)+f(y)\varphi(x), \;\; x, y\in G.
\end{equation}
is associated with the $K$-spherical function $\varphi$. Let
$C_{K}:\mathcal{C}(G)\longrightarrow \mathcal{C}(G)$ be the operator
given by $$(C_{K}f)(x)=\int_{K}f(kxk^{-1})dk, \; x\in G.$$ By easy
computations we show that $f$  is $K$-central if and only if
$C_{K}f=f$. For more results on the operator $C_{K}$ we refer to [7,
Propositions 2.1, Proposition 2.2]. We say that $f\in
\mathcal{C}(G)$ satisfies the Kannappan type condition if
$$\int_{K}\int_{K}f(zkxk^{-1}hyh^{-1})dkdh=\int_{K}\int_{K}f(zkyk^{-1}hxh^{-1})dkdh, \; x, y\in G \;\;\; \; (*)$$
When $K\subset Z(G)$, $(*)$ reduces to Kannappan condition $f(xyz)=f(yxz)$ for all $x,y, z\in G$ (see [27]).\\
The results of the present paper are organised as follows :  In
section 2 we establish relationship between functional equation
$(1.1)$ and $(1.2)$. In Theorem 2.3 we show that if $(f,g)$ is a
solution of (1.1) such that $f\neq 0$ and $C_{K}f\neq 0$, without
the assumption that $f$ satisfies (*), then $g$ is a solution of
(1.2). In section 4 we show that if $(G,K)$ is a central pair and
$f$ is a solution of $(1.2)$, then $f$  has the form
$f=\frac{\varphi+\mu\check{\varphi}}{2}$ where $\varphi$ is a
$K$-spherical function.
 Furthermore we give a complete description of the solutions of equations  of (1.1) and (1.2) in the case where $(G,K)$ is a central pair.  The solutions are expressed by means of $K$-spherical functions and solutions of the functional equation
\begin{equation}\int_{K}f(xkyk^{-1})dk=f(x)\varphi(y)+f(y)\varphi(x),\;\;x,y\in G \end{equation}
in which $\varphi$ is a $K$-spherical function. In Corollaries  4.3
and 4.4 we give  explicit formulas  of solutions of (1.2) and (1.8)
in terms of irreducible representations of $G$. Theses  formulas
generalize Euler's formula $\cos(x)=\frac{e^{ix}+e^{-ix}}{2}$ on
$G=\mathbb{R}$. In the last section we study stability [48] and
Baker's superstability (see [5] and [6]) of the functional equations
(1.1), (1.2), (1.3), (1.4), (1.5), (1.6) and (1.7). For more
information concerning the stability problem we refer to [3], [5],
[6], [11], [12], [22], [40],[41], [42], [43], [44], [45], [46], [47]
and [48]. The results of the last sections generalize the ones
obtained in [12] and [21].
\section{ General properties  of equations  $W_{\mu}(K)$ }
In this section we deal with the integral Wilson's  functional
equation (1.1) on a locally compact group $G$. We prove, without the
assumption that $f$ satisfies (*), that if $(f,g)$ is a solution of
Wilson's functional equation (1.1) then $g$ is a solution of
d'Alembert's functional equation (1.2).\\\\ For later use we need
the following proposition
\begin{prop}
Let $G$ be a locally compact group. Let $\mu: G \longrightarrow \mathbb{C}\backslash\{0\}$ be a continuous character of $G$ and let $\varphi\in \mathcal{C}(G)$ be a $K$-spherical function.  Then\\
i) $\mu \varphi$ is a $K$-spherical function.\\
ii) $\frac{\varphi+\mu\check{\varphi}}{2}$ is  a solution of (1.2).\\
iii) Assuming  $(f,g)$ is a solution of (1.1) we have : (1) the pair
$(L_{x}f,g)$ for all $x\in G$ is a solution of (1.1), and (2) the
pair $(C_{K}f,g)$ is a solution of (1.1).
\end{prop}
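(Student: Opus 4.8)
The plan is to treat the four assertions separately, in increasing order of difficulty, relying throughout on the multiplicativity of $\mu$ and on the inversion- and translation-invariance of the normalized Haar measure $dk$. For (i) I would simply insert $\mu\varphi$ into the spherical equation (1.9) and use that $\mu(xkyk^{-1})=\mu(x)\mu(y)$, since $\mu(k)\mu(k^{-1})=1$. Pulling the scalar $\mu(x)\mu(y)$ out of the integral and applying (1.9) to $\varphi$ gives $\int_{K}(\mu\varphi)(xkyk^{-1})\,dk=(\mu\varphi)(x)(\mu\varphi)(y)$ at once.

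For (ii) the preliminary step is to record two standard consequences of (1.9): taking $y=e$ forces $\varphi(e)=1$, and taking $x=e$ gives $C_{K}\varphi=\varphi$, i.e. $\varphi$ is $K$-central. The crucial intermediate lemma I would isolate is that $\mu\check{\varphi}$ is again $K$-spherical. To prove it I would expand $\int_{K}(\mu\check{\varphi})(xkyk^{-1})\,dk=\mu(x)\mu(y)\int_{K}\varphi(ky^{-1}k^{-1}x^{-1})\,dk$, use $K$-centrality to move the leading $k$ to the right, substitute $k\mapsto k^{-1}$, and then apply (1.9) with arguments $y^{-1}$ and $x^{-1}$ to get $\varphi(y^{-1})\varphi(x^{-1})$; this identifies the integral with $(\mu\check{\varphi})(x)(\mu\check{\varphi})(y)$. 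With both $\varphi$ and $\mu\check{\varphi}$ spherical, I would substitute $f=\frac{\varphi+\mu\check{\varphi}}{2}$ into the left side of (1.2), evaluate each of the four resulting integrals by (1.9) (using second argument $y^{-1}$ in the $\mu(y)\int_{K}f(xky^{-1}k^{-1})\,dk$ terms and $(\mu\check{\varphi})(y^{-1})=\mu(y)^{-1}\varphi(y)$), and collect terms. The four products factor as $\frac{1}{2}[\varphi(x)+\mu(x)\varphi(x^{-1})][\varphi(y)+\mu(y)\varphi(y^{-1})]=2f(x)f(y)$, which is exactly (1.2).

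For (iii)(1) I would replace $x$ by $x^{-1}u$ and $y$ by $v$ in (1.1): since $(L_{x}f)(u)=f(x^{-1}u)$, the two integrals defining the left side of (1.1) for $(L_{x}f,g)$ are literally the integrals for $(f,g)$ evaluated at $(x^{-1}u,v)$, so they sum to $2f(x^{-1}u)g(v)=2(L_{x}f)(u)g(v)$. For (iii)(2) I would write the left side of (1.1) for $(C_{K}f,g)$ as a double integral over $K\times K$ and use the factorization $hukvk^{-1}h^{-1}=(huh^{-1})(hk)v(hk)^{-1}$; the substitution $j=hk$, legitimate by left-invariance of $dk$, converts the inner integral into $\int_{K}f\big((huh^{-1})\,jvj^{-1}\big)\,dj$. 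Doing the same for the $v^{-1}$ term lets me apply (1.1) to the pair $(f,g)$ with first argument $huh^{-1}$, which collapses the bracket to $2f(huh^{-1})g(v)$; integrating out the remaining variable $h$ then returns $2g(v)\int_{K}f(huh^{-1})\,dh=2(C_{K}f)(u)g(v)$.

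I expect the only genuine obstacle to be the spherical-function lemma in (ii): getting the inversion bookkeeping right so that $\int_{K}\varphi(ky^{-1}k^{-1}x^{-1})\,dk$ reduces correctly to $\varphi(y^{-1})\varphi(x^{-1})$ via $K$-centrality and the substitution $k\mapsto k^{-1}$. Everything else is either a one-line use of the multiplicativity of $\mu$ or a routine application of (1.1) and (1.9) following a measure-preserving change of variable.
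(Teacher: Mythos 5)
Your proof is correct and, where the paper gives an argument at all, follows the same route: your identity $hukvk^{-1}h^{-1}=(huh^{-1})(hk)v(hk)^{-1}$ together with the substitution $j=hk$ is just a cleaner packaging of the paper's chain of Haar-measure changes of variable ($h\mapsto hk^{-1}$, $k\mapsto kh$, $k\mapsto k^{-1}$) in part iii). For parts i) and ii) the paper only says ``easy computations''; the details you supply --- in particular the key lemma that $\mu\check{\varphi}$ is again $K$-spherical, which uses the $K$-centrality of $\varphi$ obtained from $C_{K}\varphi=\varphi$, and the final factorization into $\frac{1}{2}[\varphi(x)+\mu(x)\varphi(x^{-1})][\varphi(y)+\mu(y)\varphi(y^{-1})]$ --- are correct and are exactly the computations the authors intend.
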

\begin{proof}
We get i) and ii) by easy  computations\\
iii) Let $x\in G$. For all $y, z\in G$ we have
$$\int_{K}C_{K}(L_{x^{-1}}f)(ykzk^{-1})dk+\mu(z)\int_{K}C_{K}(L_{x^{-1}}f)(ykz^{-1}k^{-1})dk$$
$$=\int_{K}\int_{K}(L_{x^{-1}}f)(hykzk^{-1}h^{-1})dkdh+
\mu(z)\int_{K}\int_{K}(L_{x^{-1}}f)(hykz^{-1}k^{-1}h^{-1})dkdh$$
$$=\int_{K}\int_{K}f(xhykzk^{-1}h^{-1})dkdh+
\mu(z)\int_{K}\int_{K}f(xhykz^{-1}k^{-1}h^{-1})dkdh$$
$$=\int_{K}\int_{K}f(xhk^{-1}ykzh^{-1})dkdh+
\mu(z)\int_{K}\int_{K}f(xhk^{-1}ykz^{-1}h^{-1})dkdh$$
$$=\int_{K}\int_{K}f(xhk^{-1}ykzh^{-1})dkdh+
\mu(z)\int_{K}\int_{K}f(xhk^{-1}ykz^{-1}h^{-1})dkdh$$
$$=\int_{K}\int_{K}f(xk^{-1}ykhzh^{-1})dkdh+
\mu(z)\int_{K}\int_{K}f(xk^{-1}ykhz^{-1}h^{-1})dkdh$$
$$=\int_{K}\int_{K}f(xkyk^{-1}hzh^{-1})dkdh+
\mu(z)\int_{K}\int_{K}f(xkyk^{-1}hz^{-1}h^{-1})dkdh$$
$$=2\int_{K}f(xkyk^{-1})d\omega_{K}(k)g(z)$$
$$=2\int_{K}(L_{x^{-1}}f)(kyk^{-1})dkg(z)$$
$$=2C_{K}(L_{x^{-1}}f)(y)g(z).$$
\end{proof}
\begin{prop}
Let $G$ be a locally compact group. Let $\mu: G \longrightarrow \mathbb{C}\backslash\{0\}$ be a character of $G$ and let $f, g\in \mathcal{C}(G)$ such that $f\neq 0$ be a solution of (1.1). Then\\
i) $g$ is $K$-central.\\
ii)  $g(x)=\mu(x)g(x^{-1})$ for all $x\in G$.\\
iii) If $f$ is $K$-central with $f(e)=0$, then $f(x)=-\mu(x)f(x^{-1})$ for all $x\in G$.\\
iv) $g(e)=1$.\\
\end{prop}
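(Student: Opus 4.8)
The plan is to extract all four assertions directly from the defining equation (1.1) by making judicious substitutions in the variable $y$ (and in one case in $x$), always invoking $f\neq 0$ to cancel the common factor $f(x)$ on the right-hand side. I would begin with (iv), which is immediate: setting $y=e$ and using $\mu(e)=1$ together with $f(xkek^{-1})=f(xkk^{-1})=f(x)$ collapses the left-hand side to $2f(x)$, so (1.1) reads $2f(x)=2f(x)g(e)$; choosing $x$ with $f(x)\neq 0$ forces $g(e)=1$.

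For (i), the key idea is to replace $y$ by $hyh^{-1}$ for a fixed $h\in K$ and to exploit the bi-invariance of the normalized Haar measure on the compact group $K$. Since $xk(hyh^{-1})k^{-1}=x(kh)y(kh)^{-1}$, the change of variable $k\mapsto kh$ leaves $\int_{K}f(xkyk^{-1})dk$ unchanged, and similarly the $y^{-1}$-integral is unchanged; moreover $\mu(hyh^{-1})=\mu(h)\mu(y)\mu(h)^{-1}=\mu(y)$ because $\mu$ is a character. Hence the left-hand side of (1.1) is invariant under this substitution, forcing $f(x)g(hyh^{-1})=f(x)g(y)$, i.e. $g(hyh^{-1})=g(y)$ for all $h\in K$ and $y\in G$. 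Replacing $y$ by $yh$ in this last identity and simplifying $h(yh)h^{-1}=hy$ yields $g(hy)=g(yh)$, which is exactly the $K$-centrality of $g$.

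Assertion (ii) follows from a second substitution: I would replace $y$ by $y^{-1}$ in (1.1) and then multiply through by $\mu(y)$. Because $\mu(y)\mu(y^{-1})=\mu(e)=1$, the two integrals on the left merely swap their roles and recombine into the original left-hand side of (1.1), which equals $2f(x)g(y)$; meanwhile the right-hand side becomes $2\mu(y)f(x)g(y^{-1})$. Cancelling $f(x)$ then gives $g(x)=\mu(x)g(x^{-1})$ for all $x\in G$.

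Finally, for (iii) I would specialise $x=e$. The hypothesis that $f$ is $K$-central means $C_{K}f=f$, so $\int_{K}f(kyk^{-1})dk=(C_{K}f)(y)=f(y)$ and likewise $\int_{K}f(ky^{-1}k^{-1})dk=f(y^{-1})$; using $f(e)=0$ the right-hand side of (1.1) vanishes, leaving $f(y)+\mu(y)f(y^{-1})=0$, which is the claim. None of these steps is genuinely difficult, so I do not expect a serious obstacle; the only points that require care are the change-of-variables argument in (i), where one must explicitly use that Haar measure on a compact group is right-invariant, and the cancellation in (ii), where the character identity $\mu(y)\mu(y^{-1})=1$ is what makes the two integrals recombine.
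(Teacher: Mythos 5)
Your proposal is correct and follows essentially the same route as the paper: parts (ii) and (iii) coincide exactly with the published argument (substitute $y\mapsto y^{-1}$ and multiply by $\mu(y)$, respectively set $x=e$ and use $K$-centrality with $f(e)=0$), while (i) and (iv), which the paper dismisses as easy computations, are carried out by you via the natural conjugation/translation-invariance of the normalized Haar measure on $K$ and the substitution $y=e$. No gaps.
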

\begin{proof}
i) by easy computations.\\ ii) let $a\in G$ such that $f(a)\neq 0$,
then for any $y\in G$ we have
\begin{equation}
\int_{K}f(aky^{-1}k^{-1})dk+\mu(y^{-1})\int_{K}f(akyk^{-1})dk=2f(a)g(y^{-1}).
\end{equation}
Multiplying (2.1) by $\mu(y)$ and using the fact that
$\mu(yy^{-1})=1$ we get for all $y\in G$
\begin{eqnarray*}
2f(a)\mu(y)g(y^{-1})&=&\int_{K}f(akyk^{-1})k+\mu(y)\int_{K}f(aky^{-1}k^{-1})dk
\\&=&2f(a)g(y)
 \end{eqnarray*}
which implies that $g(y)=\mu(y)g(y^{-1})$ for any $y\in G$.\\ iii)
since $f$ is a solution of (1.1) we get by setting $x=e$
 in (1.1)  that
$$\int_{K}f(kyk^{-1})dk+\mu(y)\int_{K}f(ky^{-1}k^{-1})dk=2f(e)g(y).$$
Since $f$ is $K$-central and $f(e)=0$ we get for all $y\in G$ that
$f(y)+\mu(y)f(y^{-1})=0$.  By easy computations we get the
remainder.
\end{proof}
 The next theorem is the main result of this section. We establish a relation between Wilson's functional equation (1.1) and d'Alembert's functional equation (1.2) on a locally compact group $G$, without the assumption that $f$ satisfies ($*$).
 \begin{thm}
Let $G$ be a locally compact group. Let $f, g\in \mathcal{C}(G) $ be
a solution of Wilson's functional equation (1.1) such that
$C_{K}f\neq 0$. Then $g$ is a solution of d'Alembert's functional
equation (1.2).
\end{thm}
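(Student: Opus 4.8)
The plan is to reduce everything, by means of the machinery already set up, to a single companion identity and to cancel a nonzero value of $f$ at the very end. First I would invoke Proposition~2.1~(iii): for every $x\in G$ the pair $(C_{K}L_{x^{-1}}f,g)$ is a solution of (1.1), so, writing $\Phi_{x}(y)=\int_{K}f(xkyk^{-1})dk$, one has for all $x,y,z$
\[\int_{K}\Phi_{x}(ykzk^{-1})dk+\mu(z)\int_{K}\Phi_{x}(ykz^{-1}k^{-1})dk=2\Phi_{x}(y)g(z),\]
and each $\Phi_{x}$ is $K$-central, i.e. $\Phi_{x}(ksk^{-1})=\Phi_{x}(s)$. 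Setting $y=e$ and using $\Phi_{x}(e)=f(x)$ together with $K$-centrality yields the basic relation $\Phi_{x}+\mu\check{\Phi}_{x}=2f(x)g$. Since $C_{K}f\neq0$ forces $f\neq0$, I fix $x_{0}$ with $f(x_{0})\neq0$ (replacing $f$ by $C_{K}f$ if convenient, which is legitimate by Proposition~2.1~(iii)); from Proposition~2.2 I may also use that $g$ is $K$-central, that $g(y)=\mu(y)g(y^{-1})$ and that $g(e)=1$.

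Next I would prove that $g$ solves (1.2) in the equivalent multiplied form $2f(x)\big[\int_{K}g(ykzk^{-1})dk+\mu(z)\int_{K}g(ykz^{-1}k^{-1})dk\big]=4f(x)g(y)g(z)$, from which the result follows after evaluating at $x_{0}$ and dividing by $f(x_{0})$. I expand the left-hand side by substituting (1.1) in the inner slot, replacing each factor $2f(x)g(\,\cdot\,)$ by the corresponding integral of $f$, and I expand the right-hand side by applying (1.1) twice, first in $y$ and then in $z$. Both sides thereby become combinations, with the weights $1,\mu(y),\mu(z),\mu(y)\mu(z)$, of the double integrals $\Psi(x,a,b)=\int_{K}\int_{K}f\big(x\,kak^{-1}\,hbh^{-1}\big)dk\,dh$; to reach this normal form I use the Haar substitution $k\mapsto h^{-1}k$ on $K$, which absorbs the trailing factor $h^{-1}$ into the conjugation and so ``parallelises'' the two conjugations, exactly as in the chain of substitutions carried out in the proof of Proposition~2.1~(iii). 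Two of the four weighted terms then match immediately after the relabelling $h\leftrightarrow k$.

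What remains, and what I expect to be the main obstacle, is to show that the two surviving terms also agree. This comes down to the companion identity
\[\int_{K}\Phi_{x}(z\,lyl^{-1})dl+\mu(z)\int_{K}\Phi_{x}(z^{-1}\,lyl^{-1})dl=2g(z)\,\Phi_{x}(y),\]
that is, to the statement that $x\mapsto\Phi_{x}(y)=\int_{K}f(xkyk^{-1})dk$ is again a solution of (1.1) in its first argument, with the same $g$. The difficulty is structural: equation (1.1) supplies the d'Alembert/Wilson pattern only in the outermost conjugated variable, whereas here that pattern is needed for $z$ sitting to the left of the conjugated $y$, and a single right translate does \emph{not} preserve the solution property. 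The way through is to establish this identity by the same kind of change-of-variables chain on the compact group $K$ (using the left-, right- and inversion-invariance of $dk$ together with the $K$-centrality of $\Phi_{x}$) that carries the two conjugation blocks past one another; it is precisely the averaging over $K$ that makes these substitutions close up and lets one dispense with the Kannappan-type condition $(*)$, which would otherwise be required to interchange the blocks. Once the companion identity is available the four terms cancel in pairs, giving $2f(x)\big[\int_{K}g(ykzk^{-1})dk+\mu(z)\int_{K}g(ykz^{-1}k^{-1})dk\big]=4f(x)g(y)g(z)$; evaluating at $x_{0}$ and dividing by $f(x_{0})$ then finishes the proof.
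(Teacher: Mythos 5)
Your reduction stalls exactly where you predict it will: the ``companion identity''
\[
\int_{K}\Phi_{x}(z\,lyl^{-1})\,dl+\mu(z)\int_{K}\Phi_{x}(z^{-1}\,lyl^{-1})\,dl=2g(z)\Phi_{x}(y)
\]
is not a consequence of Haar-invariance substitutions on $K$, and asserting it amounts to assuming the theorem. To see that the averaging over $K$ cannot be the mechanism, take $K=\{e\}$: the claim becomes $f(xzy)+\mu(z)f(xz^{-1}y)=2g(z)f(xy)$ for every solution of $f(xy)+\mu(y)f(xy^{-1})=2f(x)g(y)$ on an arbitrary non-abelian group, i.e.\ that every right translate of $f$ is again a Wilson solution with the same $g$. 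There is no averaging left to exploit, and in general $z$ is an element of $G$, not of $K$, so none of the substitutions $k\mapsto h^{-1}k$ used in Proposition~2.1 can carry $z$ past the conjugation block; that manoeuvre is precisely what the Kannappan-type condition $(*)$ would licence, and the whole point of the theorem is to avoid $(*)$. The identity does turn out to be true in the end, but only as a corollary of the structure of the solutions, which is what is being proved.

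The paper circumvents this with a dichotomy for which your proposal has no analogue. Fix $g$ and set $W_{g}=\{f:\ f\ K\text{-central, solves }(1.1),\ f(e)=0\}$; by Proposition~2.2~iii) any such $f$ satisfies $f=-\mu\check{f}$. If $W_{g}\neq\{0\}$, the paper runs an expansion of the kind you describe, but the extra relation $f=-\mu\check{f}$ is used repeatedly to convert the leftover double integrals into one another, yielding the symmetrized vanishing $f(z)\Psi_{x}(y)+f(y)\Psi_{x}(z)=0$, where $\Psi_{x}$ is the d'Alembert defect of $g$, and hence $\Psi_{x}\equiv 0$. If $W_{g}=\{0\}$, the $K$-central solutions form a one-dimensional space, from which it follows that $C_{K}f$ is a $K$-spherical function and $g=\tfrac{1}{2}\bigl(C_{K}f+\mu\,(C_{K}f)^{\vee}\bigr)$, which solves (1.2) by Proposition~2.1~ii). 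Without either the oddness hypothesis or the one-dimensionality, your four weighted terms will not cancel; the gap is genuine and sits at the heart of the argument.
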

\begin{proof}
By getting ideas from [13] and [36],  and [19]  we discuss  the following possibilities :\\
 The first possibility is  $f(x)=-\mu(x)f(x^{-1})$ for all $x\in G$. We let $x\in G$ and we put
$$\Phi_{x}(y)=\int_{K}g(xkyk^{-1})dk+\mu(y)\int_{K}g(y^{-1}kxk^{-1})dk-2g(x)g(y), \; y\in G.$$
According to Proposition 2.1 and the fact that $(f,g)$ is a solution
of (1.1) we get for any $x, y, z\in G$ that
$$2f(z)\Phi_{x}(y)+2f(y)\Phi_{x}(z)=
2f(z)[\int_{K}g(xkyk^{-1})dk$$$$+\mu(y)\int_{K}g(y^{-1}kxk^{-1})dk-2g(x)g(y)]+
2f(y)[\int_{K}g(xkzk^{-1})dk$$$$+\mu(z)\int_{K}g(z^{-1}kxk^{-1})dk-2g(x)g(z)]
=\int_{K}\int_{K}f(zhxkyk^{-1}h^{-1})dkdh$$$$+\mu(xy)\int_{K}\int_{K}f(zhy^{-1}kx^{-1}k^{-1}h^{-1})
dkdh$$$$+\mu(y)[\int_{K}\int_{K}f(zhy^{-1}kxk^{-1}h^{-1})dkdh
+\mu(y^{-1}x)\int_{K}\int_{K}f(zhx^{-1}kyk^{-1}h^{-1})dkdh]$$$$
\int_{K}\int_{K}f(yhxkzk^{-1}h^{-1})dkdh+\mu(xz)\int_{K}\int_{K}f(yhz^{-1}kx^{-1}k^{-1}h^{-1})
dkdh$$$$+\mu(z)[\int_{K}\int_{K}f(yhz^{-1}kxk^{-1}h^{-1})dkdh+$$$$
\mu(z^{-1}x)\int_{K}\int_{K}f(yhx^{-1}kzk^{-1}h^{-1})dkdh]$$$$-
\int_{K}\int_{K}f(zhxkyk^{-1}h^{-1})dkdh$$$$-
\mu(y)\int_{K}\int_{K}f(zhxky^{-1}k^{-1}h^{-1})dkdh-
\mu(x)[\int_{K}\int_{K}f(zhx^{-1}kyk^{-1}h^{-1})dkdh$$$$+
\mu(y)\int_{K}\int_{K}f(zhx^{-1}ky^{-1}k^{-1}h^{-1})dkdh]-
\int_{K}\int_{K}f(yhxkzk^{-1}h^{-1})dkdh$$$$-
\mu(z)\int_{K}\int_{K}f(yhxkz^{-1}k^{-1}h^{-1})dkdh-
\mu(x)[\int_{K}\int_{K}f(yhx^{-1}kzk^{-1}h^{-1})dkdh$$$$+
\mu(z)\int_{K}\int_{K}f(yhx^{-1}kz^{-1}k^{-1}h^{-1})dkdh]=
\mu(x)\mu(y)\int_{K}\int_{K}f(zhy^{-1}kx^{-1}k^{-1}h^{-1})dkdh$$$$+
\mu(y)\int_{K}\int_{K}f(zhy^{-1}kxk^{-1}h^{-1})
dkdh+\mu(xz)\int_{K}\int_{K}f(yhz^{-1}kx^{-1}k^{-1}h^{-1})
dkdh$$$$+\mu(z)\int_{K}\int_{K}f(yhz^{-1}kxk^{-1}h^{-1})dkdh-
\mu(y)\int_{K}\int_{K}f(zhxky^{-1}k^{-1}h^{-1})dkdh$$$$-\mu(x)\mu(y)
\int_{K}\int_{K}f(zhx^{-1}ky^{-1}k^{-1}h^{-1})dkdh-\mu(z)
\int_{K}\int_{K}f(yhxkz^{-1}k^{-1}h^{-1})dkdh$$$$-\mu(x)\mu(z)
\int_{K}\int_{K}f(yhx^{-1}kz^{-1}k^{-1}h^{-1})dkdh$$$$=
2\mu(y)\int_{K}f(zky^{-1}k^{-1})g(x)dk+2\mu(z)\int_{K}f(ykz^{-1}k^{-1})g(x)dk$$$$
-\mu(z)\int_{K}\int_{K}f(yhxkz^{-1}k^{-1}h^{-1})dkdh-\mu(x)\mu(y)
\int_{K}\int_{K}f(zhx^{-1}ky^{-1}k^{-1}h^{-1})dkdh$$$$
-\mu(y)\int_{K}\int_{K}f(zhxky^{-1}k^{-1}h^{-1})dkdh-\mu(x)\mu(z)
\int_{K}\int_{K}f(yhx^{-1}kz^{-1}k^{-1}h^{-1})dkdh$$$$=
2\mu(y)g(x)\int_{K}f(zky^{-1}k^{-1})dk-2\mu(z)g(x)\mu(yz^{-1})\int_{K}f(zky^{-1}k^{-1})dk$$$$
-\mu(x)\mu(y)\int_{K}\int_{K}f(zhx^{-1}ky^{-1}k^{-1}h^{-1})dkdh$$$$+
\mu(z)\mu(yxz^{-1})\int_{K}\int_{K}f(zhx^{-1}ky^{-1}k^{-1}h^{-1})dkdh
$$$$+\mu(y)\mu(zxy^{-1})\int_{K}\int_{K}f(yhx^{-1}kz^{-1}k^{-1}h^{-1})dkdh$$$$
-\mu(x)\mu(z)\int_{K}\int_{K}f(yhx^{-1}kz^{-1}k^{-1}h^{-1})dkdh=$$$$0.$$
Then for any $x\in   G$ we have \begin{equation}
f(z)\Phi_{x}(y)+f(y)\Phi_{x}(z)=0, \; x, y \in G.
\end{equation}
Since $f\neq 0$, then there exists $a\in G$ such that $f(a)=0$.
According to (2.2) we get that $f(a)\Phi_{x}(y)+f(y)\Phi_{x}(a)=0$
and then $\Phi_{x}(y)=-\frac{\Phi_{x}(a)}{f(a)}f(y)=c_{x}f(y)$ for
any $y\in G$. By setting $y=a$ in (2.2) we get that
$c_{x}f(a)^{2}=0$. This implies that $c_{x}=0$ and then $\Phi_{x}=0$
for any $x\in G$. According to Proposition 2.2 we get we get by
using the fact that $\Phi_{x}(y)=0$ for any $x ,y\in G$ that
\begin{eqnarray*}2g(x)g(y)&=&\int_{K}g(xkyk^{-1})dk+\mu(y)\int_{K}g(y^{-1}xkxk^{-1})dk\\
&=&\mu(x)\mu(y)\int_{K}g(y^{-1}kx^{-1}k^{-1})dk+\mu(y)\int_{K}g(y^{-1}kxk^{-1})dk\\
&=&\mu(y)[\int_{K}g(y^{-1}kxk^{-1})dk+\mu(x)\int_{K}g(y^{-1}kx^{-1}k^{-1})dk].
\end{eqnarray*}
This implies that
$$\int_{K}g(y^{-1}kxk^{-1})dk+\mu(x)\int_{K}g(y^{-1}kx^{-1}k^{-1})dk
=2g(x)\mu(y^{-1})g(y)=2g(y^{-1})g(x).$$ This completes the proof in the first possibility.\\
 Now we fix $g$ and we consider $$W_{g}=\{ f\in \mathcal{C}(G) : f\; \text{is}\; K-\text{central},\; \text{satisfies}\; (1.1)\; \text{and}\; f(e)=0\}.$$ If $W_{g}\neq \{0\}$, then by using the above computations we get the desired result,
   so we may assume $W_{g}=\{0\}$. Let $f\in \mathcal{C}(G)\setminus\{0\}$ be a solution of (1.1) such that $C_{K}f\neq 0$.
   According to Proposition 2.1 it follows that $C_{K}f$ is a solution of (1.1). Since $W_{g}=\{0\}$
   and $C_{K}(C_{K}f)=C_{K}f$ then $C_{K}f(e)=f(e)\neq 0$. Replacing $C_{K}f$ by $\frac{C_{K}f}{C_{K}f(e)}$, we may assume that $C_{K}f(e)=1$. Let $h$ be a solution of (1.1), then
$C_{K}h-(C_{K}h)(e)C_{K}f\in W_{g}=\{0\}$. So
$C_{K}h=(C_{K}h)(e)C_{K}f$. According to Proposition 2.1 we have for
any $x\in G$ that
$C_{K}(L_{x^{-1}}C_{K}f)(y)=\int_{K}f(xkyk^{-1})dk$ for any $y\in G$
is a solution of (1.1) and that
$C_{K}(C_{K}(L_{x^{-1}}C_{K}f))=C_{K}(L_{x^{-1}}C_{K}f)$. So that
$C_{K}(L_{x^{-1}}C_{K}f)=C_{K}(L_{x^{-1}}C_{K}f)(e)C_{K}f=C_{K}f(x)C_{K}f$.
Then
$\int_{K}C_{K}f(xkyk^{-1})dk=C_{K}(L_{x^{-1}}C_{K}f)(y)=C_{K}f(x)C_{K}f(y)$,
which show that $C_{K}f$ is a $K$-spherical function i.e.
$\int_{K}C_{K}f(xkyk^{-1})dk=C_{K}f(x)C_{K}f(y)$ for all $x,y \in
G$. Substituting this result into
$$\int_{K}C_{K}f(xkyk^{-1})dk+\mu(y)\int_{K}C_{K}f(xky^{-1}k^{-1})dk=2C_{K}f(x)g(y),\;
x, y\in G$$ we get $g(y)=\frac{C_{K}f(y)+\mu(y)C_{K}f(y^{-1})}{2}$.
According to  Proposition 2.1 ii)  we get that $g$ satisfies
equation (2.1). This finishes  the proof of theorem.
\end{proof}
\section{ Study of integral Wilson's functional  equation $W_{\mu}(K)$ on a central pair}
Let $f: G\longrightarrow \mathbb{C}$. For $x\in G$ we define
\begin{equation}\label{eq15} f_{x}(y)=\int_{K}f(xkyk^{-1})dk-f(x)f(y), \; y\in G.\end{equation}
When $f$ is $K$-spherical, then $f_{x}\equiv 0$. A generalized
symmetrized sine addition law is given by
\begin{equation}\label{eq16}
\int_{K}\omega(xkyk^{-1})dk+\int_{K}\omega(ykxk^{-1})dk=2\omega(x)f(y)+2\omega(y)f(x),
\; x,y \in G
\end{equation}
For later use we need the following results :
\begin{prop}([7])
Let $(G,K)$ be a central pair and let $f\in \mathcal{C}(G)$. Then we have\\
i) $f$ satisfies the Kannappan type condition ($*$).\\
ii) If $f$ is $K$-central, then
$$\int_{K}f(xkyk^{-1})dk=\int_{K}f(ykxk^{-1})dk, \; x, y\in G.$$
\end{prop}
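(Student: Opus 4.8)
The plan is to reduce both parts to a single commutation identity for the conjugation-averaged point masses. For $x\in G$ let $m_x$ be the compactly supported measure defined by $\int_G\phi\,dm_x=\int_K\phi(kxk^{-1})\,dk=(C_K\phi)(x)$ for $\phi\in\mathcal{C}(G)$; each $m_x$ is invariant under conjugation by $K$. With $\phi=L_{z^{-1}}f$ one checks directly that
$$\int_K\int_K f(zkxk^{-1}hyh^{-1})\,dk\,dh=\int_G\int_G\phi(st)\,dm_x(s)\,dm_y(t)=(m_x\star m_y)(\phi),$$
while the right-hand side of $(*)$ equals $(m_y\star m_x)(\phi)$; thus part i) is exactly the assertion that $m_x\star m_y=m_y\star m_x$ for all $x,y$, and it needs no centrality of $f$. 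For part ii) I would first use $C_Kf=f$ to write $f(xkyk^{-1})=\int_K f(hxh^{-1}(hk)y(hk)^{-1})\,dh$, and then, after Fubini and the substitution $k\mapsto h^{-1}k$, obtain $\int_K f(xkyk^{-1})\,dk=(m_x\star m_y)(f)$; since the right-hand side of ii) is $(m_y\star m_x)(f)$, part ii) again follows once the measures commute.

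So everything rests on the following \emph{core lemma}: if $(G,K)$ is a central pair, then $m_x\star m_y=m_y\star m_x$ for all $x,y\in G$. To prove it I would approximate each $m_x$ weakly by elements of $\mathcal{K}_{K}(G)$ and transport the commutativity of $(\mathcal{K}_{K}(G),\star)$ to the limit. The plan is to fix a positive approximate identity $(e_i)\subset\mathcal{K}(G)$ with $\int_G e_i=1$ and shrinking supports, set $v_i(t)=e_i(x^{-1}t)$, and put $u_i^{x}=C_Kv_i$, that is $u_i^{x}(t)=\int_K e_i(x^{-1}ktk^{-1})\,dk$. Since conjugation by an element of the compact group $K$ preserves Haar measure on $G$ (the modular function is trivial on $K$), the operator $C_K$ is self-adjoint for $\int_G\cdot\,dt$ and sends $\mathcal{K}(G)$ into the conjugation-invariant functions; hence $u_i^{x}\in\mathcal{K}_{K}(G)$, and $\int_G u_i^{x}\phi=\int_G v_i\,(C_K\phi)\to(C_K\phi)(x)=\int_G\phi\,dm_x$, so $u_i^{x}\to m_x$ weakly.

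Now commutativity gives $u_i^{x}\star u_j^{y}=u_j^{y}\star u_i^{x}$ for all $i,j$, and passing to the weak limit first in $i$ and then in $j$ yields $m_x\star m_y=m_y\star m_x$, which proves the core lemma and therefore both i) and ii). \emph{The main obstacle is precisely this passage to the limit:} one must justify that the iterated weak limits of $u_i^{x}\star u_j^{y}$ actually compute $m_x\star m_y$, which requires controlling the supports (all the relevant measures lie in a fixed compact subset of $G$, providing the tightness that makes convolution continuous along the net) together with the self-adjointness of $C_K$ invoked above. The purely algebraic manipulations in the two reductions are routine; the genuine analytic content lies entirely in constructing the approximating net inside $\mathcal{K}_{K}(G)$ and in showing that convolution commutes with these weak limits.
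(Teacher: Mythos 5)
Your argument is correct, but note that the paper itself offers no proof of this proposition at all: it is stated with the attribution ``([7])'' and simply imported from Bakali--Bouikhalene, so you have supplied an argument the paper omits. What you give is the standard measure-algebra proof, and it checks out. The two reductions are exact: with $m_x(\phi)=\int_K\phi(kxk^{-1})\,dk$ one has $\int_K\int_K f(zkxk^{-1}hyh^{-1})\,dk\,dh=(m_x\star m_y)(L_{z^{-1}}f)$, so ($*$) for every $f$ and $z$ is precisely $m_x\star m_y=m_y\star m_x$; and for part ii) the substitution $k\mapsto h^{-1}k$ after inserting $C_Kf=f$ correctly identifies $\int_K f(xkyk^{-1})\,dk$ with $(m_x\star m_y)(f)$ (here the relation $f(kx)=f(xk)$ is equivalent to $K$-conjugation invariance, which is what makes $C_K$ land in $\mathcal{K}_{K}(G)$). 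The limit passage you flag as the main obstacle is also sound: $\Delta|_K\equiv 1$ because $K$ is compact, so conjugation by $k\in K$ preserves Haar measure and $\int_G(C_Kv)\phi=\int_G v\,(C_K\phi)$ holds; all the $u_i^{x}\star u_j^{y}$ are supported in one fixed compact set with $\|u_i^x\|_1=1$, so taking $i\to\infty$ and then $j\to\infty$ in the identity $\int(u_i^{x}\star u_j^{y})\phi=\int(u_j^{y}\star u_i^{x})\phi$ (the same order on both sides, with dominated convergence over $K$ in the second step) yields $(m_x\star m_y)(\phi)=(m_y\star m_x)(\phi)$ for every $\phi\in\mathcal{C}(G)$. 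This is, as far as one can tell, essentially the argument of the cited reference rather than a genuinely different route.
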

As an immediate consequence we get the following corollary
\begin{cor} Let $(G,K)$ be a central pair and let $\omega\in \mathcal{C}(G)$.
If $\omega$ is $K$-central then (3.2) reduces to the generalized
sine addition formula
\begin{equation}\label{eq17}
\int_{K}\omega(xkyk^{-1})dk=\omega(x)f(y)+\omega(y)f(x), \; x,y \in
G.
\end{equation}
\end{cor}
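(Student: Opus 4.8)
The plan is to deduce the corollary directly from part (ii) of Proposition 3.1, with the central-pair hypothesis entering only through that symmetry result. First I would observe that, since $\omega$ is $K$-central and $(G,K)$ is a central pair, Proposition 3.1 ii) (applied with $\omega$ in place of $f$) yields
$$\int_{K}\omega(xkyk^{-1})\,dk=\int_{K}\omega(ykxk^{-1})\,dk$$
for all $x,y\in G$. In other words, the two integral terms appearing on the left-hand side of the generalized symmetrized sine addition law (3.2) coincide.

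With this symmetry in hand, substituting it into (3.2) collapses the left-hand side to twice a single integral, giving
$$2\int_{K}\omega(xkyk^{-1})\,dk=2\omega(x)f(y)+2\omega(y)f(x),$$
and dividing through by $2$ produces exactly (3.3). This completes the argument.

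I do not anticipate any genuine obstacle here: the entire content of the corollary is carried by the $K$-centrality of $\omega$ together with Proposition 3.1 ii), and the reduction itself is purely algebraic. The only point worth flagging is that the symmetry $\int_{K}\omega(xkyk^{-1})\,dk=\int_{K}\omega(ykxk^{-1})\,dk$ genuinely requires both that $\omega$ be $K$-central and that $(G,K)$ be a central pair; without the central-pair assumption the two integrals need not agree, and the symmetrized form (3.2) could not be simplified to the one-sided form (3.3).
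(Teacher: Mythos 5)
Your argument is correct and is precisely the one the paper intends: the corollary is stated as an immediate consequence of Proposition 3.1 ii), which under the $K$-centrality of $\omega$ and the central-pair hypothesis identifies the two integrals on the left of (3.2), so that (3.2) becomes twice (3.3). Nothing further is needed.
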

\begin{prop}
Let $f\in \mathcal{C}\setminus\{0\}$ be a solution of the functional equation (1.2). Then\\
i) $f(e)=1$,\\
ii)  $f$ is $K$-central, \\
iii) $f(x)=\mu(x)f(x^{-1})$ for all $x\in G$,\\
iv) $$\int_{K}f(xkyk^{-1})dk=\int_{K}f(ykxk^{-1})dk, \; x, y\in G.$$
v) For any $x\in G$, $(f_{x},f)$ is a solution of (3.2).
\end{prop}
\begin{proof}
i) By setting $y=e$ in (1.2) and by using the fact that $f\neq 0$ we get that $f(e)=1$.\\
ii) For any $x,y\in G$ we have
\begin{eqnarray*}
2f(x)\int_{K}f(kyk^{-1})dk&=&\int_{K}2f(x)f(kyk^{-1})dk\\
&=&\int_{K}(\int_{K}f(xhkyk^{-1}h^{-1})dh\\&+&\mu(y)\int_{K}f(xhky^{-1}k^{-1}h^{-1})dhdk\\
&=&\int_{K}f(xhyh^{-1})dh+\mu(y)\int_{K}f(xhy^{-1}h^{-1})dh\\
&=&2f(x)f(y).
\end{eqnarray*}
So that $f(y)=\int_{K}f(kyk^{-1})dk$ for all $y\in G$, from which we get that $f$ is $K$-central.\\
iii) Since $f$ is $K$-central we get by putting $x=e$ in (1.2) that $f(y)+\mu(y)f(y^{-1})=2f(y)$ for all $y\in G$. Hence $f(y)=\mu(y)f(y^{-1})$ for all $y\in G$.\\
iv) In view of iii) we have for all $x,y\in G$ that
\begin{eqnarray*}
&&\int_{K}f(xkyk^{-1})dk+\mu(y)\int_{K}f(xky^{-1}k^{-1})dk\\&=&2f(x)f(y)\\
&=&2f(y)f(x)\\
&=& \int_{K}f(ykxk^{-1})dk+\mu(x)\int_{K}f(ykx^{-1}k^{-1})dk\\
&=&\int_{K}f(ykxk^{-1})dk+\mu(y)\int_{K}f(xky^{-1}k^{-1})dk.
\end{eqnarray*}
So that we get $\int_{K}f(ykxk^{-1})dk=\int_{K}f(xkyk^{-1})dk$ for all $x, y \in G$.\\
v) In the next we adapt the method used in [35].  According to (1.2)
we get for any $x, y, z$ that
\begin{eqnarray}&&\int_{K}\int_{K}f(xkyk^{-1}hzh^{-1})dkdh+\end{eqnarray}
$$\mu(z)\int_{K}\int_{K}f(xkyk^{-1}hz^{-1}h^{-1})dkdh
=2\int_{K}f(xkyk^{-1})dkf(z),$$
\begin{eqnarray}&&\int_{K}\int_{K}f(xhykz^{-1}k^{-1}h^{-1})dkdh+\end{eqnarray}
$$\mu(yz^{-1})\int_{K}\int_{K}f(xhkzk^{-1}y^{-1}h^{-1})dkdh=
2f(x)\int_{K}f(ykz^{-1}k^{-1})dk,$$
\begin{eqnarray}&&\int_{K}\int_{K}f(xkzk^{-1}hy^{-1}h^{-1})dkdh+\end{eqnarray}
$$\mu(y^{-1})\int_{K}\int_{K}f(xkzk^{-1}hyh^{-1})dkdh=
2\int_{K}f(xkzk^{-1})dkf(y^{-1})$$ from which we get by multiplying
(3.5) by $\mu(z)$ and (3.6) by $\mu(y)$
$$\int_{K}\int_{K}f(xkyk^{-1}hzh^{-1})dkdh+
\mu(z)\int_{K}\int_{K}f(xkyk^{-1}hz^{-1}h^{-1})dkdh$$$$=
2\int_{K}f(xkyk^{-1})dkf(z),$$
$$\mu(z)\int_{K}\int_{K}f(xhykz^{-1}k^{-1}h^{-1})dkdh+
\mu(y)\int_{K}\int_{K}f(xhkzk^{-1}y^{-1}h^{-1})dkdh$$$$=
2\mu(z)f(x)\int_{K}f(ykz^{-1}k^{-1})dk,$$
$$\mu(y)\int_{K}\int_{K}f(xkzk^{-1}hy^{-1}h^{-1})dkdh+
\int_{K}\int_{K}f(xkzk^{-1}hyh^{-1})dkdh$$$$=
2\mu(y)\int_{K}f(xkzk^{-1})dkf(y^{-1}).$$ By subtracting the middle
one from the sum of the two others we get
$$\int_{K}\int_{K}f(xkyk^{-1}hzh^{-1})dkdh+\int_{K}\int_{K}f(xkzk^{-1}hyh^{-1})
dkdh$$
$$=2\int_{K}f(xkyk^{-1})dkf(z)+2\mu(y)\int_{K}f(xkzk^{-1})dkf(y^{-1})$$
$$-2\mu(z)f(x)\int_{K}f(ykz^{-1}k^{-1})dk.$$
Using the fact that $f(y)=\mu(y)f(y^{-1})$ for any $y\in G$ we get
\begin{eqnarray*}
&&\int_{K}\int_{K}f(xkyk^{-1}hzh^{-1})dhdk-f(x)\int_{K}f(ykzk^{-1})dk\\
&+&\int_{K}\int_{K}f(xkzk^{-1}hyh^{-1})dhdk-f(x)\int_{K}f(zkyk^{-1})dk\\
&=&2\int_{K}f(xkyk^{-1})dkf(z)-2f(x)\int_{K}f(yhzk^{-1})dh\\
&+&2\int_{K}f(xkzk^{-1})dkf(y)-2f(x)[2f(y)f(z)-\int_{K}f(yhzk^{-1})dh]\\
&=&2\int_{K}f(xkyk^{-1})dkf(z)-2f(x)f(y)f(z)\\
&+&2\int_{K}f(xkzk^{-1})dkf(y)-2f(x)f(y)f(z).\end{eqnarray*} So we
have for any $x,y, z\in G$ that
$$\int_{K}f_{x}(ykzk^{-1})dk+\int_{K}f_{x}(zkyk^{-1})dk
=2f_{x}(y)f(z)+ 2f_{x}(z)f(y).$$ Hence for all $x\in G$, the pair
$(f_{x},f)$ is a solution of (3.2).
\end{proof}
\begin{thm}
Let $f, g\in \mathcal{C}(G)$ be a solution of the functional
equation
\begin{equation}\label{eq17}
\int_{K}f(xkyk^{-1})dk=f(x)g(y)+g(x)f(y), \; x,y \in G.
\end{equation} Then one of the following statements hold \\
i) $f= 0$ and $g$ arbitrary in $\mathcal{C}(G)$.\\
ii) There exists a $K$-spherical function $\varphi$  and a non-zero
constant $c\in \mathbb{C}^{*}$ such that
$$g=\frac{\varphi}{2}, \; f=c\varphi.$$
iii) There exist two $K$-spherical functions $\varphi$, $\psi$ for
which $\varphi \neq \psi$ and a non-zero constant $c\in
\mathbb{C}^{*}$ such that $$g=\frac{\varphi+\psi}{2}, \;
f=c(\varphi-\psi).$$ iv) $g$ is a $K$-spherical function and $f$ is
associated to $g$.
\end{thm}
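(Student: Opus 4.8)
The plan is to treat $f=0$ as the trivial alternative (case i), assume $f\neq 0$, and organize everything around the value $f(e)$. First note that $f\neq 0$ forces $g\neq 0$: setting $y=e$ in the given equation yields $f(x)=f(x)g(e)+f(e)g(x)$, so $g\equiv 0$ would give $f\equiv 0$. The same substitution, rewritten as $f(x)\bigl(1-g(e)\bigr)=f(e)g(x)$, splits the analysis. If $f(e)\neq 0$ it shows $g=\beta f$ with $\beta=(1-g(e))/f(e)$; substituting back gives $\int_K f(xkyk^{-1})\,dk=2\beta f(x)f(y)$, so $\varphi:=2\beta f$ satisfies $\int_K\varphi(xkyk^{-1})\,dk=\varphi(x)\varphi(y)$ and is $K$-spherical, and one reads off $f=c\varphi$, $g=\varphi/2$ with $c=1/(2\beta)\neq 0$: this is case ii. From now on I assume $f(e)=0$, whence the displayed relation forces $g(e)=1$, and the substitution $x=e$ gives $C_Kf=f$, i.e. $f$ is $K$-central.

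Next I would upgrade this to $K$-centrality of $g$ by a conjugation-averaging step: replacing $x$ by $hxh^{-1}$ in the equation and integrating $h$ over $K$, the substitution $k\mapsto hk$ collapses the resulting double average of $f$ back to $\int_K f(xkyk^{-1})\,dk$ (using $C_Kf=f$), and comparing with the averaged right-hand side gives $(C_Kg)(x)f(y)=g(x)f(y)$, hence $C_Kg=g$. With both $f,g$ now $K$-central, Proposition 3.1 ii) makes $G(x,y):=\int_K g(xkyk^{-1})\,dk$ symmetric, so the kernel $H(x,y):=g(x)g(y)-G(x,y)$ is symmetric in its two arguments.

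The core step is an associativity identity for the iterated average $I(x,y,z)=\iint_K f(xk_1yk_1^{-1}k_2zk_2^{-1})\,dk_1\,dk_2$. Integrating $k_2$ first and applying the equation gives $I=g(z)\Phi(x,y)+f(z)G(x,y)$ with $\Phi(x,y)=f(x)g(y)+g(x)f(y)$; the Kannappan-type condition $(*)$ of Proposition 3.1 i) makes $I$ symmetric in $y,z$, and after substituting $\Phi$ and cancelling the common term this symmetry collapses to
\[ f(y)H(x,z)=f(z)H(x,y),\qquad x,y,z\in G. \]
Choosing $y_0$ with $f(y_0)\neq 0$ gives $H(x,z)=\lambda(x)f(z)$, and symmetry of $H$ then forces $\lambda=cf$, i.e. $H(x,y)=c\,f(x)f(y)$, equivalently $\int_K g(xkyk^{-1})\,dk=g(x)g(y)-c\,f(x)f(y)$. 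If $c=0$ then $g$ is $K$-spherical and the given equation says precisely that $f$ is associated to $g$: case iv. If $c\neq 0$, set $\varphi=g+\sqrt{-c}\,f$ and $\psi=g-\sqrt{-c}\,f$; using $\int_K f(xkyk^{-1})\,dk=\Phi(x,y)$ together with $\int_K g(xkyk^{-1})\,dk=g(x)g(y)-c\,f(x)f(y)$ one checks directly that each satisfies the $K$-spherical equation, and since $\varphi(e)=\psi(e)=1$ they are genuine $K$-spherical functions with $\varphi-\psi=2\sqrt{-c}\,f\neq 0$. Then $g=(\varphi+\psi)/2$ and $f=\frac{1}{2\sqrt{-c}}(\varphi-\psi)$: case iii.

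I expect the main obstacle to be the associativity computation leading to $f(y)H(x,z)=f(z)H(x,y)$: one must arrange the iterated $K$-averages so that the defining equation can be applied in the inner integral, and then invoke the Kannappan-type condition $(*)$ in exactly the right place to produce the $y\leftrightarrow z$ symmetry of $I$ — this is the only step that genuinely uses the central-pair hypothesis. The surrounding bookkeeping (that $g\neq 0$, that $\varphi$ and $\psi$ are nonzero so that calling them $K$-spherical is legitimate, and that $c\neq 0$ is exactly the dividing line between cases iii and iv) is routine once $H(x,y)=c\,f(x)f(y)$ has been established.
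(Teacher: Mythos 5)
Your proof is correct, but there is essentially nothing in the paper to compare it against: the paper's entire proof of this theorem is the sentence ``The proof is similar to one used in [20]'', so your write-up supplies an argument the paper omits. What you do is the standard symmetrization route for sine-addition laws (going back to Chojnacki and Stetk{\ae}r, and presumably what the cited reference carries out in the Gelfand/central-pair setting), and every step checks out: the dichotomy on $f(e)$, with $f(e)\neq 0$ forcing $g=\beta f$, $\beta\neq 0$, and hence case ii); the derivation of $C_Kf=f$ and then $C_Kg=g$ by conjugation-averaging (the change of variable $k\mapsto hk$ is legitimate by invariance of the normalized Haar measure of $K$); the identity $I(x,y,z)=g(z)\Phi(x,y)+f(z)G(x,y)$ obtained by applying the equation in the inner integral; the use of the Kannappan-type condition $(*)$ to get $f(y)H(x,z)=f(z)H(x,y)$ and hence $\int_Kg(xkyk^{-1})dk=g(x)g(y)-c\,f(x)f(y)$; and the final split on $c$, with $\varphi=g+\sqrt{-c}\,f$ and $\psi=g-\sqrt{-c}\,f$ genuinely $K$-spherical because $\varphi(e)=\psi(e)=1$. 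Two minor points only. First, the theorem as printed omits the central-pair hypothesis, which your proof (correctly) requires through Proposition 3.1; you are right to flag where it enters. Second, your closing remark that the Kannappan step is the \emph{only} place that hypothesis is used is defensible but slightly overstated as written: you also invoke Proposition 3.1 ii) for the symmetry of $\int_Kg(xkyk^{-1})dk$, which likewise assumes $(G,K)$ is central, although that symmetry can in turn be deduced from $(*)$ once $C_Kg=g$ is known. Neither point affects the validity of the argument.
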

\begin{proof}
 The proof is similar to one used in  [20].
\end{proof}
\section{Solution of equation $W_{\mu}(K)$ on a central pair}
In this section we obtain solution of equation $(1.1)$ in the case
where $(G,K)$ is a central pair.\\\\ In the next theorem we solve
the functional equation $(1.2)$. We will adapt the method used in
[35].
\begin{thm}
Let $(G,K)$ be a central pair. Let $\mu: G\longrightarrow
\mathbb{C}^{*}$ be a character on $G$ and let $f: G\longrightarrow
\mathbb{C}$ be a non-zero solution of the functional equation (2.1).
 Then there exists a $K$-spherical function $\varphi : G\longrightarrow \mathbb{C}$ such that
$f=\frac{\varphi+\mu\check{\varphi}}{2}.$
\end{thm}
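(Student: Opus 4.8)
The plan is to extract from Proposition 3.3 the structural properties of a non-zero solution $f$ of (1.2) and then feed the associated ``sine'' functions $f_{x}$ (defined in (3.1)) into Theorem 3.4. By Proposition 3.3, $f$ is $K$-central, $f(e)=1$, and $f=\mu\check{f}$; moreover, for each $x\in G$ the pair $(f_{x},f)$ is a solution of the symmetrized sine addition law (3.2). Since $f$ is $K$-central, each $f_{x}$ is $K$-central in its argument, so Corollary 3.2 lets me replace (3.2) by the generalized sine addition formula $\int_{K}f_{x}(ykzk^{-1})dk=f_{x}(y)f(z)+f_{x}(z)f(y)$, which is precisely an instance of (3.7) with the unknowns $(f_{x},f)$ playing the roles of $(f,g)$. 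Thus Theorem 3.4 applies to $(f_{x},f)$ for every $x$.

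First I would dispose of the degenerate case. If $f_{x}\equiv 0$ for all $x$, then $\int_{K}f(xkyk^{-1})dk=f(x)f(y)$, so $f$ itself is a $K$-spherical function; taking $\varphi=f$ and using $f=\mu\check{f}$ gives $\mu\check{\varphi}=\varphi$, whence $f=\frac{\varphi+\mu\check{\varphi}}{2}$. Otherwise fix $x_{0}$ with $f_{x_{0}}\neq 0$ and apply Theorem 3.4 to $(f_{x_{0}},f)$. Case (i) is excluded since $f_{x_{0}}\neq 0$; case (ii) is excluded because it would force $f=\frac{\varphi}{2}$ and hence $f(e)=\frac12\neq 1$; case (iv) is excluded because it requires the second component $f$ to be $K$-spherical, which would make $f_{x_{0}}=0$. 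Hence only case (iii) survives: there are distinct $K$-spherical functions $\varphi\neq\psi$ and a constant $c\in\mathbb{C}^{*}$ with $f=\frac{\varphi+\psi}{2}$ and $f_{x_{0}}=c(\varphi-\psi)$.

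It then remains to identify $\psi$ with $\mu\check{\varphi}$, and the key is to combine two symmetry relations. From $f=\mu\check{f}$ I immediately obtain $\varphi+\psi=\mu\check{\varphi}+\mu\check{\psi}$. For the second relation I compute the parity of $f_{x}$ directly from (1.2): replacing $y$ by $y^{-1}$, multiplying through by $\mu(y)$, and using $\mu(y)f(y^{-1})=f(y)$ yields $\mu(y)f_{x}(y^{-1})=-f_{x}(y)$, i.e. $f_{x}=-\mu\check{f_{x}}$. Applying this to $f_{x_{0}}=c(\varphi-\psi)$ and cancelling $c\neq 0$ gives $\varphi-\psi=\mu\check{\psi}-\mu\check{\varphi}$. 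Adding and subtracting the two relations forces $\psi=\mu\check{\varphi}$ (and, consistently, $\varphi=\mu\check{\psi}$), so that $f=\frac{\varphi+\mu\check{\varphi}}{2}$ with $\varphi$ the $K$-spherical function supplied by Theorem 3.4.

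The only genuinely delicate point is the derivation of the anti-symmetry $f_{x}=-\mu\check{f_{x}}$: without it the single relation $\varphi+\psi=\mu\check{\varphi}+\mu\check{\psi}$ cannot distinguish the pairing $\{\varphi,\psi\}\leftrightarrow\{\mu\check{\varphi},\mu\check{\psi}\}$, since one could a priori have $\mu\check{\varphi}=\varphi$ and $\mu\check{\psi}=\psi$. Everything else, namely the reduction via Corollary 3.2 and the elimination of cases (i), (ii), (iv) in Theorem 3.4, is routine bookkeeping built on the already-established Proposition 3.3.
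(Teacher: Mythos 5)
Your proof is correct, and it follows the paper's overall skeleton: invoke Proposition 3.3(v) to get that $(f_{x},f)$ solves the sine addition law, split into the case $f_{x}\equiv 0$ for all $x$ (where $f$ itself is $K$-spherical and $f=\mu\check{f}$ gives the result) and the case $f_{x_{0}}\neq 0$, and in the latter case land in alternative (iii) of Theorem 3.4, so that $f=\frac{\varphi+\psi}{2}$ for distinct $K$-spherical $\varphi,\psi$. Where you genuinely diverge is in the identification $\psi=\mu\check{\varphi}$. The paper substitutes $f=\frac{\varphi+\psi}{2}$ back into (1.2), obtains $\varphi(x)\bigl[\mu(y)\varphi(y^{-1})-\psi(y)\bigr]+\psi(x)\bigl[\mu(y)\psi(y^{-1})-\varphi(y)\bigr]=0$, and kills both brackets by citing the linear independence of distinct spherical functions (Godement). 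You instead derive the anti-symmetry $f_{x}=-\mu\check{f_{x}}$ directly from (1.2) and $f=\mu\check{f}$, apply it to $f_{x_{0}}=c(\varphi-\psi)$, and solve the resulting $2\times 2$ linear system $\varphi+\psi=\mu\check{\varphi}+\mu\check{\psi}$, $\varphi-\psi=\mu\check{\psi}-\mu\check{\varphi}$. This buys you independence from the linear-independence theorem for spherical functions, at the cost of using the explicit expression for $f_{x_{0}}$ from case (iii) (which the paper's argument never needs); it also correctly rules out the degenerate pairing $\varphi=\mu\check{\varphi}$, $\psi=\mu\check{\psi}$, which a symmetry argument based on $f=\mu\check{f}$ alone could not exclude -- you rightly flag this as the delicate point. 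A further merit of your write-up is that you explicitly eliminate alternatives (i), (ii) and (iv) of Theorem 3.4 (via $f(e)=1$ and via $f$ spherical forcing $f_{x_{0}}=0$), steps the paper passes over in silence.
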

\begin{proof}
By using  5i) in Proposition 3.3 we get for all $x\in G$ that the pair $(f_{x},f)$ is a solution of (3.2).\\
First case : There exists $x\in G$ such that $f_{x} \neq 0$.
According to iii) in Theorem 3.4, there exist two $K$-spherical
functions $\varphi$ and $\psi$ such that $\varphi \neq\psi$ and that
$f=\frac{\varphi+\psi}{2}$. By substituting this in (1.2) we get for
all $x,y \in G$ that
$$\varphi(x)[\mu(y)\varphi(y^{-1})-\psi(y)]+\psi(x)[\mu(y)\psi(y^{-1})-\varphi(y)]=0.$$
 Since $\varphi\neq \psi$, according to [19] we get that $\varphi$ and $\psi$ are linearly independent. Hence $\psi(y)=\mu(y)\varphi(y^{-1})$ for any $y\in G$.\\
By iv) of Theorem 3.4  we get by a small computation the desired result.\\
Second case : if $f_{x}=0$, for all $x\in G$  then
$\int_{K}f(xkyk^{-1})d\omega_{K}(k)=f(x)f(y)$ for all $x, y\in G$.
Then by subsisting $f$ in (1.2) we get that $f(x)=\mu(x)f(x^{-1})$
for all $x\in G$.  Hence
$f=\varphi=\frac{\varphi+\mu\check{\varphi}}{2}$ where $\varphi$ is
$K$-spherical function.
\end{proof}
In the next theorem we solve the functional equation (1.1).
\begin{thm}
Let $(G,K)$ be a central pair. If $(f,g)$ is a solution of (1.1)
such that  $C_{K}f\neq 0$,
then there exists a $K$-spherical function $\varphi$ such that\\
1) $$g=\frac{\varphi+\mu\check{\varphi}}{2}.$$
2) \\
i) When $\varphi \neq \mu\check{\varphi}$, then there exist $\alpha,
\beta \in \mathbb{C}$ such that
$$f=\alpha\frac{\varphi+\mu\check{\varphi}}{2}+\beta\frac{\varphi-\mu\check{\varphi}}{2}.$$
ii) When $\varphi =\mu\check{\varphi}$, then $f=\alpha\varphi+l$
where $\alpha \in \mathbb{C}$ and $l$ is a solution of the
functional equation
\begin{equation}
\int_{K}l(xkyk^{-1})dk=l(x)\varphi(y)+\varphi(x)l(y), \; x,y\in G.
\end{equation}
\end{thm}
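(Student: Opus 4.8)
The plan is to chain the classification tools already in place. \emph{Conclusion 1).} Since $C_Kf\neq 0$ we have $f\neq 0$, so Theorem 2.3 shows that $g$ solves the d'Alembert equation (1.2); by Proposition 2.2 (iv), $g(e)=1$, so $g\neq 0$. Applying Theorem 4.1 to $g$ yields a $K$-spherical $\varphi$ with $g=\frac{\varphi+\mu\check\varphi}{2}$, which is 1). I note for later that $\psi:=\mu\check\varphi$ is again $K$-spherical — this is precisely what the first case in the proof of Theorem 4.1 produces (and $\psi=\varphi$ in the degenerate case) — so $g=\frac{\varphi+\psi}{2}$ with $\varphi,\psi$ both $K$-spherical, and I abbreviate $d:=\frac{\varphi-\mu\check\varphi}{2}$.

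\emph{Reduction and the cosine part.} By Proposition 2.1 (iii), $F:=C_Kf$ is a nonzero $K$-central solution of (1.1), which I determine first. A short computation, using $K$-centrality of $F$ together with Proposition 3.1 (ii), shows that $\mu\check F$ solves (1.1) with the same $g$; hence so do the $K$-central functions $F_\pm:=\frac{F\pm\mu\check F}{2}$, and one has $F_+=\mu\check{F_+}$, $F_-=-\mu\check{F_-}$, $F_-(e)=0$. Thus $F_-\in W_g$, the space appearing in the proof of Theorem 2.3. For the even part, put $u:=F_+-F_+(e)\,g$: it is $K$-central, solves (1.1), and $u(e)=0$, so $u\in W_g$, whence Proposition 2.2 (iii) gives $u=-\mu\check u$; but $u=\mu\check u$ because $F_+$ and $g=\mu\check g$ (Proposition 2.2 (ii)) are even. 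Therefore $u=0$ and $F_+=\alpha g$ with $\alpha:=F_+(e)$.

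\emph{The sine part.} To identify $F_-\in W_g$, I argue that $(F_-,g)$ satisfies the sine addition law. Mimicking the computation of Proposition 3.3 (v) — this is where the Kannappan-type condition $(*)$ of Proposition 3.1 enters, together with the parities $F_-=-\mu\check{F_-}$ and $g=\mu\check g$ — one obtains the symmetrized law (3.2) for $(F_-,g)$; since $F_-$ is $K$-central, Corollary 3.2 sharpens it to (3.7). Theorem 3.4 then classifies $(F_-,g)$ with $g=\frac{\varphi+\psi}{2}$. If $\varphi\neq\psi$, case (iii) forces $F_-=c(\varphi-\psi)=\beta d$, whence $F=\alpha\frac{\varphi+\mu\check\varphi}{2}+\beta\frac{\varphi-\mu\check\varphi}{2}$, which is 2i). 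If $\varphi=\psi$, then $g=\varphi$ is $K$-spherical and case (iv) forces $F_-=:l$ to be associated to $\varphi$, i.e. to satisfy (4.2), whence $F=\alpha\varphi+l$, which is 2ii). The possibility $F_-=0$ is the subcase $\beta=0$, resp. $l=0$.

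\emph{Main obstacle.} Two steps carry the real weight. The computational one is establishing (3.2) for $(F_-,g)$: this is the long conjugation-integral manipulation, wholly parallel to Proposition 3.3 (v) and to the method of [20] and [35]. The conceptual one is the passage from $F=C_Kf$ back to $f$ itself. Every admissible form in 2i)--2ii) is $K$-central, so one must show that a solution is pinned down by its $K$-central average, equivalently that the $C_K$-null part $f-C_Kf$ vanishes. I expect this to be the genuinely delicate point; the natural route is to feed the translates $(L_af,g)$ supplied by Proposition 2.1 (iii) back into the analysis above and to exploit $g\neq 0$ in order to remove the non-central contribution.
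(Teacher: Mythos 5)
Your overall strategy --- Theorem 2.3 to make $g$ a solution of (1.2), Theorem 4.1 to write $g=\frac{\varphi+\mu\check{\varphi}}{2}$, then an even/odd decomposition with respect to $x\mapsto\mu(x)f(x^{-1})$ whose even part is $\alpha g$ and whose odd part is classified by Theorem 3.4 --- is exactly the paper's strategy, and your treatment of the even part (putting $u=F_+-F_+(e)g$ into $W_g$ and killing it by the parity clash $u=\mu\check{u}=-\mu\check{u}$) is a clean alternative to the paper's device of setting $x=e$. But the proof as written has a genuine hole, and you name it yourself: every statement you actually establish concerns $F=C_Kf$, so at the end you have classified $C_Kf$ only, while the theorem asserts a formula for $f$. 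The ``natural route'' you sketch in the final paragraph (feeding the translates $L_af$ back into the analysis) is not carried out, so the argument is incomplete precisely at the step that produces the conclusion.

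The paper avoids the detour through $C_Kf$ altogether: it decomposes $f$ itself as $f=f_1+f_2$ with $f_1=\frac{f+\mu\check{f}}{2}$ and $f_2=\frac{f-\mu\check{f}}{2}$, and, invoking the identity $\int_Kf(xkyk^{-1})dk=\int_Kf(ykxk^{-1})dk$, shows that $(f_1,g)$ again solves (1.1) --- whence $f_1=f_1(e)g=\alpha g$ by putting $x=e$ --- while $f_2$ solves the sine addition law $\int_Kf_2(xkyk^{-1})dk=f_2(x)g(y)+f_2(y)g(x)$, to which parts iii) and iv) of Theorem 3.4 apply. So the obstacle you single out simply does not arise on the paper's route. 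That said, your instinct that this point is delicate is sound: the symmetry identity the paper uses for $f$ is only established in Proposition 3.1 ii) for $K$-central functions (and in Proposition 3.3 iv) for solutions of (1.2)), and every function appearing in the conclusion of the theorem is $K$-central; neither your argument nor the paper's proves that a solution $f$ of (1.1) with $C_Kf\neq 0$ must satisfy $f=C_Kf$. To complete your proof you must either establish that identity or switch to the paper's direct decomposition of $f$ and justify the symmetry relation for $f$ itself.
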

\begin{proof}
Let $f, g\in \mathcal{C}(G) \setminus\{0\}$ be a solution of (1.2),
then by Theorem 2.3 we get that $g$ is a solution of (1.2).
According to Theorem 4.1 there exists a $K$-spherical function such
that  $g(x)=\frac{\varphi(x)+\mu(x)\varphi(x^{-1})}{2}$ for any
$x\in G$. By decomposing $f$ in the following way we get for any
$x\in G$ that
\begin{eqnarray*}f(x)&=&\frac{f(x)+\mu(x)f(x^{-1})}{2}+\frac{f(x)-\mu(x)f(x^{-1})}{2} \\&=&
f_{1}(x)+f_{2}(x)\end{eqnarray*} where
$f_{1}(x)=\frac{f(x)+\mu(x)f(x^{-1})}{2}$ and
$f_{2}(x)=\frac{f(x)-\mu(x)f(x^{-1})}{2}$ for all $x\in G$. By easy
computations we get that $f_{1}(x)=\mu(x)f_{1}(x^{-1})$ and
$f_{2}(x)=-\mu(x)f_{2}(x^{-1})$ for all $x\in G$. By using the fact
that $\int_{K}f(xkyk^{-1})dk=\int_{K}f(ykxk^{-1})dk$ for all $x,
y\in G$ we get that
\begin{equation}
 \int_{K}f_{1}(xkyk^{-1})dk+\mu(y)\int_{K}f_{1}(xky^{-1}k^{-1})dk=2f_{1}(x)g(y),\;\;x,y\in G.
\end{equation}
By setting $x=e$ in (4.3) we get that $f_{1}(y)=f_{1}(e)g(y)=\alpha
g(y)$ for any $y\in G$. On the other hand by  small computations we
show that $f_{2}$ is a solution of the functional equation
\begin{equation}
 \int_{K}f_{2}(xkyk^{-1})dk=f_{2}(x)g(y)+f_{2}(y)g(x),\;\;x,y\in G.
\end{equation}
According to iii) and iv) in Theorem 3.4  we get the remainder.
\end{proof}
In the next corollary we use R. Godement's spherical functions theory [27] to give explicit formulas in terms of irreducible representations of $G$ :  Let $(\pi,\mathcal{H})$ be a completely irreducible representation of $G$, $\delta$ be an irreducible representation of $K$ and $\chi_{\delta}$ the normalized character of $\delta$. The set of vectors in $\mathcal{H}$ which under $k\longrightarrow \pi(k)$ transform according to $\delta$ is denoted by $\mathcal{H}_{\delta}$. The operator $E(\delta)=\int_{K}\pi(k)\overline{\chi_{\delta}}(k)dk$ is a continuous projection of $\mathcal{H}$ onto $\mathcal{H}_{\delta}$.\\\\
We will say that a function $f$ is quasi-bounded if there exists a
semi-norm $\rho(x)$ such that $\sup_{x\in
G}\frac{|f(x)|}{\rho(x)}<+\infty$ (see [27]). According to [7,
Theorem 4.1, Theorem 6.2] we have the following corollary
\begin{cor}
Let $(G,K)$ be a central pair. Let $f$ be a non-zero quasi-bounded
continuous function on $G$ satisfying $\chi_{\delta}\star f=f$. Then
$f$ is a solution of (1.2) if and only if there exists a completely
irreducible representation $(\pi,\mathcal{H})$ of $G$ such that
$$f(x)=\frac{1}{2dim(\delta)}(tr(E(\delta)\pi(x))+\mu(x)tr(E(\delta)\pi(x^{-1})),\; x\in G$$
where $tr$ is the trace on $\mathcal{H}$ and $dim(\delta)$ is the
dimension of $\delta$.
\end{cor}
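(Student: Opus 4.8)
The plan is to combine the structural result of Theorem 4.1 with the representation-theoretic description of quasi-bounded $K$-spherical functions furnished by Godement's theory, as recorded in [7, Theorems 4.1 and 6.2]. Concretely, Theorem 4.1 says every non-zero solution of (1.2) has the form $f=\frac{\varphi+\mu\check{\varphi}}{2}$ for some $K$-spherical function $\varphi$, while [7] identifies the quasi-bounded $K$-spherical functions satisfying $\chi_{\delta}\star\varphi=\varphi$ as exactly the functions $\varphi(x)=\frac{1}{dim(\delta)}tr(E(\delta)\pi(x))$ attached to a completely irreducible representation $(\pi,\mathcal{H})$. Since $\check{\varphi}(x)=\varphi(x^{-1})=\frac{1}{dim(\delta)}tr(E(\delta)\pi(x^{-1}))$, the asserted formula is precisely $f=\frac{\varphi+\mu\check{\varphi}}{2}$ written out, so the whole statement reduces to matching these two descriptions and to checking that the hypotheses on $f$ pass to $\varphi$.

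I would first dispose of the converse (``if'') direction, which is the clean one. Given such a $(\pi,\mathcal{H})$, set $\varphi(x)=\frac{1}{dim(\delta)}tr(E(\delta)\pi(x))$; by the Godement representation in [7] this $\varphi$ is a $K$-spherical function. Then $\frac{\varphi+\mu\check{\varphi}}{2}$ coincides with the displayed $f$, and Proposition 2.1(ii) guarantees that $\frac{\varphi+\mu\check{\varphi}}{2}$ solves (1.2). Hence $f$ is a solution of (1.2), with no further computation.

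For the direct (``only if'') direction I would start from a non-zero solution $f$ of (1.2) and apply Theorem 4.1 to write $f=\frac{\varphi+\mu\check{\varphi}}{2}$ with $\varphi$ a $K$-spherical function. The core of the argument is then to verify that $\varphi$ inherits the two standing hypotheses on $f$, namely quasi-boundedness and the $\delta$-type condition $\chi_{\delta}\star\varphi=\varphi$, so that [7, Theorems 4.1 and 6.2] apply to $\varphi$ itself and produce a single completely irreducible $(\pi,\mathcal{H})$ with $\varphi(x)=\frac{1}{dim(\delta)}tr(E(\delta)\pi(x))$. Substituting this back into $f=\frac{\varphi+\mu\check{\varphi}}{2}$ and expanding $\check{\varphi}$ yields the required formula.

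The main obstacle is exactly this transfer of hypotheses from $f$ to the spherical function $\varphi$ extracted by Theorem 4.1. Since $f$ mixes $\varphi$ with the $\mu$-twisted reversal $\mu\check{\varphi}$, one must argue that the averaging and translation operations of Proposition 2.1 (the $L_{x}$ and the projection $C_{K}$), together with convolution by $\chi_{\delta}$, preserve quasi-boundedness and commute suitably with the $\delta$-projection, so that the $\varphi$ recovered from $f$ via the spherical-function machinery of [7] is again quasi-bounded and of $\delta$-type. One also needs the resulting $\varphi$ to be attached to one irreducible representation rather than to a combination, which is where complete irreducibility and the quasi-bounded hypothesis are used. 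Once this transfer is secured, the classification of [7] closes the argument.
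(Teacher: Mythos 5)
Your proposal is correct and follows exactly the route the paper intends: the paper gives no written proof, simply deducing the corollary from Theorem 4.1 (the decomposition $f=\frac{\varphi+\mu\check{\varphi}}{2}$) together with the Godement-type classification of quasi-bounded $K$-spherical functions of type $\delta$ in [7, Theorems 4.1 and 6.2], with Proposition 2.1(ii) handling the converse. Your added attention to transferring quasi-boundedness and the condition $\chi_{\delta}\star f=f$ from $f$ to $\varphi$ is a point of care the paper leaves implicit, but it does not change the argument.
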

In the next corollary we assume that $G$ is a compact. Then $(G,G)$
is a central pair (see [7] and [9]).
\begin{cor}
Let $G$ be a compact group and let $f$ be a continuous function on
$G$. Then $f$ is is a solution of (1.8) if and only if there exists
a continuous irreducible representation $(\pi,\mathcal{H})$ such
that
$$f(x)=\frac{\chi_{\pi}(x)+\mu(x)\chi_{\pi}(x^{-1})}{2d(\pi)}, \; x\in G$$
where $\chi_{\pi}$ and $d(\pi)$ are respectively the character and
the dimension of $\pi$.
\end{cor}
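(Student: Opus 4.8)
The plan is to deduce this from Theorem~4.1 together with the classification of the $G$-spherical functions of the compact group $G$. Since $G$ is compact we take $K=G$, and $(G,G)$ is a central pair, so that $(1.8)$ is precisely equation $(1.2)$ in the case $K=G$. Hence Theorem~4.1 applies: every non-zero continuous solution $f$ of $(1.8)$ is of the form $f=\frac{\varphi+\mu\check{\varphi}}{2}$ for some $G$-spherical function $\varphi$, i.e. a non-zero $\varphi\in\mathcal{C}(G)$ with $\int_{G}\varphi(xtyt^{-1})\,dt=\varphi(x)\varphi(y)$ for all $x,y\in G$. The whole problem therefore reduces to describing these $\varphi$ explicitly.

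First I would show that for every continuous irreducible representation $(\pi,\mathcal{H})$ the normalized character $\varphi_{\pi}=\chi_{\pi}/d(\pi)$ is $G$-spherical. The computation rests on Schur's averaging identity $\int_{G}\pi(t)A\pi(t)^{-1}\,dt=\frac{tr(A)}{d(\pi)}\,\mathrm{Id}$, valid for any operator $A$ on $\mathcal{H}$ because the left-hand side commutes with all $\pi(s)$ and $\pi$ is irreducible, the scalar being fixed by taking traces. Applying this with $A=\pi(y)$ gives $\int_{G}\chi_{\pi}(xtyt^{-1})\,dt=\frac{\chi_{\pi}(x)\chi_{\pi}(y)}{d(\pi)}$, which is exactly the spherical identity for $\varphi_{\pi}$. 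Substituting $\varphi=\varphi_{\pi}$, and using $\check{\varphi}_{\pi}(x)=\chi_{\pi}(x^{-1})/d(\pi)$, into $f=\frac{\varphi+\mu\check{\varphi}}{2}$ yields the announced formula $f(x)=\frac{\chi_{\pi}(x)+\mu(x)\chi_{\pi}(x^{-1})}{2d(\pi)}$.

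For the converse direction I would argue that, given any irreducible $\pi$, the function $\varphi_{\pi}$ is $G$-spherical by the computation above, so Proposition~2.1 (ii) guarantees that $\frac{\varphi_{\pi}+\mu\check{\varphi}_{\pi}}{2}$ — which is the displayed function — solves $(1.2)$ with $K=G$, i.e. $(1.8)$. This is the easy implication and requires only the already-established Proposition~2.1. One may also recover the formula directly from Corollary~4.3 by taking $\delta=\pi$: for $K=G$ and $\pi$ irreducible the projection $E(\delta)=\int_{G}\pi(k)\overline{\chi_{\delta}}(k)\,dk$ equals $\mathrm{Id}$ when $\delta\cong\pi$, so $tr(E(\delta)\pi(x))=\chi_{\pi}(x)$ and $\dim(\delta)=d(\pi)$, and Corollary~4.3 collapses to the present statement.

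The main obstacle is the converse half of the spherical-function classification: that every $G$-spherical function of the compact group $G$ is one of the normalized characters $\chi_{\pi}/d(\pi)$, with no others occurring. This is where the representation theory of $G$ is genuinely used. The cleanest route is to observe that for $K=G$ the algebra $\mathcal{K}_{G}(G)$ is the (commutative) convolution algebra of continuous class functions, whose continuous multiplicative functionals are indexed, via Peter--Weyl and the Schur orthogonality relations, exactly by the irreducible representations; the associated spherical functions are then forced to be the $\chi_{\pi}/d(\pi)$. Once this classification is in hand, together with Theorem~4.1 and Proposition~2.1 (ii), the corollary follows by the substitution carried out above.
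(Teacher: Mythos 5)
Your proposal is correct and follows the same route the paper intends: the corollary is stated without proof as a specialization of Theorem~4.1 (via the central pair $(G,G)$) together with the classical fact that the $G$-spherical functions of a compact group are exactly the normalized irreducible characters $\chi_{\pi}/d(\pi)$, which is precisely what you establish with the Schur averaging identity and the Peter--Weyl description of the class-function algebra. Your observation that the formula also drops out of Corollary~4.3 with $E(\delta)=\mathrm{Id}$ matches the paper's citation of Godement's theory, so the only (shared) imprecision is that the statement should exclude $f=0$, which solves (1.8) but is not of the displayed form.
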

\section{Superstablity  of $W_{\mu}(K)$ on a locally compact group}
In this section, by using Theorem 2.4, we study the superstability
problem of equations $W_{\mu}(K)$ and  $D_{\mu}(K)$ on non abelian
case
\begin{lem}
Let $\delta>0$. Let $\mu: G\longrightarrow \mathbb{C}$ be a unitary
character of $G$. let  $f, g \in \mathcal{C}(G)$ such that $f$ is
unbounded and $(f, g)$ is a solution of the inequality \small
\begin{equation}
 |\int_{K}f(xkyk^{-1})d\omega_{K}(k)+\mu(y)\int_{K}f(xky^{-1}k^{-1})dk-2f(x)g(y)|\leq \delta,\;\;x,y\in G.
\end{equation} Then\\
i) For all $x\in G$, $(C_{K}(L_{x^{-1}}f),g)$ is a solution of the inequality (5.1).\\
ii) $g(y)=\mu(y)g(y^{-1})$ for all $y\in G$.\\
iii) $g$ is $K$ central.
\end{lem}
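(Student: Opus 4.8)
The plan is to prove the three assertions in turn, in each case exploiting the normalization $\int_K dk = 1$ and, for (ii) and (iii), the hypothesis that $f$ is unbounded.

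For (i) I would retrace the chain of substitutions from the proof of Proposition 2.1(iii), applying the inequality (5.1) in place of the exact identity (1.1) only at the last step. Concretely, fix $x\in G$ and $k\in K$, and apply (5.1) with first variable $xkyk^{-1}$ and second variable $z$:
\[
\Bigl|\int_K f(xkyk^{-1}hzh^{-1})\,dh + \mu(z)\int_K f(xkyk^{-1}hz^{-1}h^{-1})\,dh - 2f(xkyk^{-1})g(z)\Bigr| \le \delta .
\]
Integrating over $k\in K$ and using $\int_K dk = 1$ preserves the bound $\delta$. The same Haar-measure manipulations used in Proposition 2.1(iii) (the substitution $h\mapsto hk^{-1}$ together with the conjugation invariance of the iterated integral over $K\times K$) identify the integrated left-hand side with
\[
\int_K C_K(L_{x^{-1}}f)(ykzk^{-1})\,dk + \mu(z)\int_K C_K(L_{x^{-1}}f)(ykz^{-1}k^{-1})\,dk - 2\,C_K(L_{x^{-1}}f)(y)\,g(z),
\]
which is exactly the defect of the pair $(C_K(L_{x^{-1}}f),g)$ in (5.1). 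Hence this defect is $\le\delta$, proving (i).

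For (ii), I replace $y$ by $y^{-1}$ in (5.1) and multiply through by $\mu(y)$; since $\mu$ is unitary, $|\mu(y)|=1$, so the bound $\delta$ is preserved, giving
\[
\Bigl|\int_K f(xkyk^{-1})\,dk + \mu(y)\int_K f(xky^{-1}k^{-1})\,dk - 2\mu(y)f(x)g(y^{-1})\Bigr| \le \delta .
\]
Subtracting this from (5.1) via the triangle inequality yields $2|f(x)|\,|g(y)-\mu(y)g(y^{-1})| \le 2\delta$ for all $x,y\in G$. As $f$ is unbounded, for each fixed $y$ one may let $|f(x)|$ grow without bound, which forces $g(y)=\mu(y)g(y^{-1})$.

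For (iii) I would replace $y$ by $hyh^{-1}$ for a fixed $h\in K$. Because $\mu$ is a character, $\mu(hyh^{-1})=\mu(y)$, and the substitution $k\mapsto kh$ in the normalized Haar integral shows $\int_K f(xk(hyh^{-1})^{\pm1}k^{-1})\,dk = \int_K f(xky^{\pm1}k^{-1})\,dk$. Thus (5.1) with $y$ replaced by $hyh^{-1}$ differs from (5.1) only in its last term, and subtracting gives
\[
\Bigl| 2f(x)g(hyh^{-1}) - 2f(x)g(y) \Bigr| \le 2\delta ;
\]
unboundedness of $f$ then forces $g(hyh^{-1})=g(y)$ for all $h\in K$, $y\in G$. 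Taking $h=k^{-1}$ and $y=kx$ in this conjugation invariance gives $g(xk)=g(kx)$, i.e.\ $g$ is $K$-central. The genuine content lies in the two places where boundedness is invoked: the only way to pass from a product inequality $|f(x)|\,|\Delta g|\le\delta$ to $\Delta g=0$ is the unboundedness of $f$, and the survival of the constant $\delta$ throughout relies on $\mu$ being unitary and on $K$ carrying normalized Haar measure. I expect the main obstacle to be the bookkeeping in (i), namely organizing the substitutions so that the defect of $(C_K(L_{x^{-1}}f),g)$ is recovered exactly; this is precisely the computation already validated for the exact equation in Proposition 2.1(iii), which I would reuse up to the final application of the inequality.
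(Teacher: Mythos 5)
Your proposal is correct and follows essentially the same route as the paper: part (ii) reproduces the paper's argument verbatim (replace $y$ by $y^{-1}$, multiply by $\mu(y)$ using $|\mu(y)|=1$, apply the triangle inequality, and invoke unboundedness of $f$). The paper dismisses (i) and (iii) as ``easy computations,'' and your fill-ins --- integrating the pointwise inequality over $k$ and reusing the substitutions of Proposition 2.1(iii) for (i), and the conjugation-invariance argument $g(hyh^{-1})=g(y)$ for (iii) --- are exactly the intended ones.
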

\begin{proof}
i) and iii) by easy computation.\\
ii) Since $(f, g)$ is a solution of (5.1), then we get for any $x,
y\in G$
$$|\int_{K}f(xkyk^{-1})dk+\mu(y)\int_{K}f(xky^{-1}k^{-1})dk-2f(x)g(y)|\leq \delta$$
and
$$|\int_{K}f(xky^{-1}k^{-1})dk+\mu(y^{-1})\int_{K}f(xkyk^{-1})dk-2f(x)g(y^{-1})|\leq \delta.$$
By multiplying the last inequality by $\mu(y)$ we get that
$$|\mu(y)\int_{K}f(xky^{-1}k^{-1})dk+\int_{K}f(xkyk^{-1})dk-2f(x)\mu(y)g(y^{-1})|\leq \delta.$$
By triangle inequality we get
$$|2f(x)||g(y)-\mu(y)g(y^{-1})|\leq 2\delta$$ for all $y\in G$. Since $f$ is unbounded we get that $g(y)=\mu(y)g(y^{-1})$
for all $y\in G$.\end{proof}
\begin{lem}
Let $\delta>0$. Let $\mu: G\longrightarrow \mathbb{C}^{*}$ be a
unitary character of $G$. Let  $f, g \in \mathcal{C}(G)$ such that
$g$ is unbounded solution of inequality (5.1). Then $g$ is a
solution of d'Alembert's functional equation (1.2).
\end{lem}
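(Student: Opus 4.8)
The plan is to transport the argument of Theorem 2.3 to the approximate setting, and then to divide out an unbounded factor in Baker's manner. First I would reduce to the hypothesis of Lemma 5.1. If $f$ were bounded, then $\int_{K}f(xkyk^{-1})dk$ and $\int_{K}f(xky^{-1}k^{-1})dk$ would both be bounded by $\sup_{G}|f|$, and (5.1) would give $2|f(x)|\,|g(y)|\le 2\sup_{G}|f|+\delta$ for all $x,y\in G$; since $g$ is unbounded this forces $f\equiv 0$, a degenerate case I discard. Hence $f$ is unbounded, Lemma 5.1 applies, and $g$ is $K$-central with $g(y)=\mu(y)g(y^{-1})$ for every $y\in G$. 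These two symmetries are precisely what is needed to run the algebra below.

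Next I would introduce, exactly as in Theorem 2.3, the defect
\[
\Phi_{x}(y)=\int_{K}g(xkyk^{-1})dk+\mu(y)\int_{K}g(y^{-1}kxk^{-1})dk-2g(x)g(y),\qquad x,y\in G,
\]
and replay the long double-integral computation of that theorem almost verbatim. The only difference is that every step in which the exact equation (1.1) was used to exchange a product $2f(\,\cdot\,)g(\,\cdot\,)$ for a double $K$-integral of $f$ is now governed by the inequality (5.1), so each exchange introduces an additive remainder of modulus at most $\delta$. Because $\mu$ is unitary ($|\mu|\equiv 1$) and the normalized Haar measure of $K$ is invariant under $k\mapsto k^{-1}$ and under left translation, all the purely algebraic cancellations of Theorem 2.3—which use only $\mu(ab)=\mu(a)\mu(b)$ and reindexing of the $K$-integrations—survive unchanged. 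Tracking the remainders and using the symmetries of Lemma 5.1 to cancel the $g$-weighted contributions, one aims at an estimate of the form
\[
\bigl|f(z)\Phi_{x}(y)+f(y)\Phi_{x}(z)\bigr|\le C\delta,\qquad x,y,z\in G,
\]
with $C$ an absolute constant counting the applications of (5.1).

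From here I would finish by the usual superstability extraction. Fixing $a$ with $f(a)\neq 0$ and setting $z=a$ gives $\Phi_{x}(y)=c_{x}f(y)+O(\delta)$ with $c_{x}=-\Phi_{x}(a)/f(a)$; substituting back into the estimate and letting $|f(y)|\to\infty$ (possible since $f$ is unbounded) forces $c_{x}=0$, so that $\Phi_{x}$ is itself bounded by a multiple of $\delta$. A second Baker-type step, now applied to the single unbounded function $g$ together with its bounded defect, upgrades this to $\Phi_{x}\equiv 0$. Finally, invoking the symmetry $g=\mu\check{g}$ and the $K$-centrality of $g$ from Lemma 5.1, the identity $\Phi_{x}\equiv 0$ is exactly the assertion that $g$ solves the d'Alembert equation (1.2).

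I expect the error analysis to be the main obstacle. In the exact computation of Theorem 2.3 every term cancels, but here the trades coming from the quadratic term $-2g(x)g(y)$ threaten to drag the unbounded factor $g$ into the remainder; the delicate point is to organize the computation so that a \emph{single} unbounded factor multiplies the defect, with all other remainders genuinely bounded by $\delta$, so that dividing along a sequence on which that factor tends to infinity annihilates the $\delta$-terms. Verifying that the bounded-versus-$g$-weighted bookkeeping actually closes, and that the concluding Baker extraction on $g$ itself is legitimate, is the technical heart of the proof.
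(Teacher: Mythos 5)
There is a genuine gap in the central step. The computation you propose to ``replay almost verbatim'' from Theorem 2.3 does not close under your hypotheses: in the paper the identity $f(z)\Phi_{x}(y)+f(y)\Phi_{x}(z)=0$ is established only in the \emph{first possibility}, where $f=-\mu\check{f}$, and the cancellation of the terms $2\mu(y)g(x)\int_{K}f(zky^{-1}k^{-1})dk+2\mu(z)g(x)\int_{K}f(ykz^{-1}k^{-1})dk$ uses precisely this skew-symmetry of $f$ --- not merely $\mu(ab)=\mu(a)\mu(b)$ and reindexing of the $K$-integrations, as you assert. The approximate version of that computation is exactly Lemma 5.3 of the paper, and there the surviving remainder is $2\beta(1+|g(x)|)$, where $\beta$ is a bound for $|f+\mu\check{f}|$; that is, one must \emph{assume} $x\mapsto f(x)+\mu(x)f(x^{-1})$ bounded, which you do not have. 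Without it, your target estimate $|f(z)\Phi_{x}(y)+f(y)\Phi_{x}(z)|\le C\delta$ is simply not obtainable (an unbounded $f$-term multiplied by $g(x)$ survives), so the Baker extraction that follows has nothing to act on. You also never address the second half of the proof of Theorem 2.3 (the $W_{g}$ argument), which is what handles a general $f$ that is not $\mu$-skew; a single run of the $\Phi_{x}$ computation cannot cover the general case.

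The hypothesis you should be exploiting directly is the unboundedness of $g$, not of $f$. The method the paper cites (and carries out explicitly in part 3) of Theorem 5.6) is: expand $2g(z)$ times the Wilson defect of $(f,g)$ at $(x,y)$ as a telescoping sum of instances of (5.1), obtaining $2|g(z)|\,\bigl|\int_{K}f(xkyk^{-1})dk+\mu(y)\int_{K}f(xky^{-1}k^{-1})dk-2f(x)g(y)\bigr|\le \delta(8+2|g(y)|)$; dividing along a sequence with $|g(z_{n})|\to\infty$ shows that $(f,g)$ satisfies the exact equation (1.1), after which Theorem 2.3 yields that $g$ satisfies (1.2). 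Finally, your dismissal of the case $f\equiv 0$ is not innocent: there the inequality (5.1) is vacuous and an arbitrary unbounded $g$ need not satisfy (1.2), so the statement implicitly presupposes $f\neq 0$ with $C_{K}f\neq 0$ (as in Theorem 5.6); this should be recorded as a hypothesis rather than ``discarded.''
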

\begin{proof}
By using the same method as in [14, Corollary 2.7 iii] we get that
$g$  is a solution of (1.2)
\end{proof}
\begin{lem}
Let $\delta>0$. Let $\mu: G\longrightarrow \mathbb{C}^{*}$ be a
unitary character of $G$. Let  $f, g \in \mathcal{C}(G)$ such that
$f$ is unbounded solution of inequality (5.1) and that the function
$x\longrightarrow f(x)+\mu(x)f(x^{-1})$ is bounded and $C_{K}f\neq
0$. Then $g$ is a solution of d'Alembert's functional equation
(1.2).
\end{lem}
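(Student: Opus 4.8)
The plan is to isolate the unbounded part of $f$, which is forced to be $\mu$-antisymmetric, and then run the stability analogue of the first possibility in the proof of Theorem 2.3. First I would split into two cases according to whether $g$ is bounded. If $g$ is unbounded, then $(f,g)$ is an unbounded-$g$ solution of (5.1) and Lemma 5.2 already gives that $g$ solves (1.2); so the whole content lies in the case where $g$ is bounded, say $\norm{g}_{\infty}\leq N$. Write $f=f_{1}+f_{2}$ with $f_{1}=\frac{f+\mu\check{f}}{2}$ and $f_{2}=\frac{f-\mu\check{f}}{2}$. The hypothesis says $f_{1}$ is bounded, while $f$ is unbounded, so $f_{2}$ is unbounded; moreover a direct check gives $f_{2}(x)=-\mu(x)f_{2}(x^{-1})$ for all $x\in G$, which is exactly the antisymmetry driving the first possibility of Theorem 2.3.

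Since the left-hand side of (5.1) is linear in $f$, I would next transfer the inequality to $f_{2}$. Because $f_{1}$ and $g$ are both bounded and $\mu$ is unitary, the defect produced by $f_{1}$ is bounded by $C_{0}:=2\norm{f_{1}}_{\infty}+2\norm{f_{1}}_{\infty}N$, so $(f_{2},g)$ satisfies (5.1) with the new constant $\delta'=\delta+C_{0}$. Now set, as in Theorem 2.3,
$$\Phi_{x}(y)=\int_{K}g(xkyk^{-1})dk+\mu(y)\int_{K}g(y^{-1}kxk^{-1})dk-2g(x)g(y),\quad y\in G.$$
The key step is to replay the long computation of Theorem 2.3 with $f$ replaced by $f_{2}$ and with every exact use of the Wilson equation (1.1) replaced by the inequality (5.1) for $(f_{2},g)$. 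The measure-preserving rearrangements on the compact group $K$ and the applications of $f_{2}(x)=-\mu(x)f_{2}(x^{-1})$ are exact and contribute no error, while the equation is invoked only finitely many times, each up to $\delta'$. As $\mu$ is unitary this produces a uniform bound
$$\abs{f_{2}(z)\Phi_{x}(y)+f_{2}(y)\Phi_{x}(z)}\leq C_{1},\quad x,y,z\in G,$$
with $C_{1}$ independent of $x,y,z$.

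From here the conclusion follows from the boundedness of $g$. Fix $x\in G$. Since $g$ is bounded, $\Phi_{x}$ is bounded, say $\abs{\Phi_{x}}\leq P$. If $\Phi_{x}(y_{0})\neq 0$ for some $y_{0}\in G$, then the displayed bound yields $\abs{f_{2}(z)}\abs{\Phi_{x}(y_{0})}\leq C_{1}+\abs{f_{2}(y_{0})}P$ for every $z\in G$, so $f_{2}$ would be bounded, contradicting its unboundedness. Hence $\Phi_{x}\equiv 0$ for every $x\in G$, that is,
$$\int_{K}g(xkyk^{-1})dk+\mu(y)\int_{K}g(y^{-1}kxk^{-1})dk=2g(x)g(y),\quad x,y\in G.$$
Finally I would convert this into (1.2): by Lemma 5.1 the function $g$ is $K$-central and satisfies $g(y)=\mu(y)g(y^{-1})$, and precisely these two facts are what the closing lines of the proof of Theorem 2.3 use to rewrite $\int_{K}g(y^{-1}kxk^{-1})dk$ and to pass from $\Phi_{x}\equiv 0$ to d'Alembert's equation (1.2) for $g$.

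The main obstacle is the second paragraph: carrying the enormous identity of Theorem 2.3 through with inequalities while making sure the accumulated error stays a genuine constant and, in particular, is never multiplied by an unbounded factor of $g$. This is exactly why the reduction to bounded $g$ must come first. The hypothesis $C_{K}f\neq 0$ guarantees $f\not\equiv 0$ and allows one, if preferred, to work throughout with the $K$-central averages $C_{K}(L_{x^{-1}}f)$ supplied by Lemma 5.1 i); but the two properties that actually power the argument are that $f$ is unbounded and that $f+\mu\check{f}$ is bounded.
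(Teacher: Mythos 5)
Your proof is correct and follows essentially the same route as the paper: split on whether $g$ is bounded (invoking Lemma 5.2 when it is not), derive from the long computation of Theorem 2.3 an approximate symmetrized identity $\abs{f(z)\Phi_{x}(y)+f(y)\Phi_{x}(z)}\leq C$ with a uniform constant, use the unboundedness of $f$ to force $\Phi_{x}\equiv 0$, and finish via the $K$-centrality and $\mu$-symmetry of $g$ from Lemma 5.1. Your two departures --- working with the exactly $\mu$-antisymmetric unbounded part $f_{2}=\frac{f-\mu\check{f}}{2}$ instead of carrying the bounded function $f+\mu\check{f}$ through the identity, and killing $\Phi_{x}$ by a direct contradiction with the boundedness of $\Phi_{x}$ rather than extracting a constant $c_{x}$ along a sequence $z_{n}$ with $\abs{f(z_{n})}\to\infty$ --- are cosmetic and, if anything, make the error bookkeeping more transparent than in the paper's version.
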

\begin{proof}
First case: $g$ is bounded. Let
$$\psi(x,y)=\int_{K}g(xkyk^{-1})dk+\mu(y)\int_{K}g(y^{-1}kxk^{-1})dk-2g(x)g(y),\;
x, y\in G.$$
 Then according to the proof of  Theorem 2.4 we get for all $x, y, z\in G$ that
$$2f(z)\psi(x,y)+2f(y)\psi(x,z)=2g(x)[\int_{K}f(zkyk^{-1})dk+\int_{K}f(ykz^{-1}k^{-1})dk]$$
$$-\mu(z)\int_{K}\int_{K}f(yhxkz^{-1}k^{-1}h^{-1})dkdh-\mu(x)\mu(y)
\int_{K}\int_{K}f(zhx^{-1}ky^{-1}k^{-1}h^{-1})dkdh$$$$
-\mu(y)\int_{K}\int_{K}f(zhxky^{-1}k^{-1}h^{-1})dkdh-\mu(x)\mu(z)
\int_{K}\int_{K}f(yhx^{-1}kz^{-1}k^{-1}h^{-1})dkdh.$$ Since the
function $x\longrightarrow f(x)+\mu(x)f(x^{-1})$ is bounded, then
there exists $\beta>0$ such that $|f(x)+\mu(x)f(x^{-1})|\leq \beta$
for all $x\in G$. So that we get for any $x, y, z\in G$ that
\begin{equation}|2f(z)\psi(x,y)+2f(y)\psi(x,z)|\leq 2\beta(1+|g(x)|).\end{equation}
Since $f$ is unbounded, then there exists a sequence $(z_{n})_{n\in \mathbb{N}}$ such that $\lim_{n\longrightarrow \infty}|f(z_{n})|=+\infty$. By using the inequality (5.2), it follows that there exists $c_{x}\in \mathbb{C}$ such that $\psi(x,y)=c_{x}f(y)$ for all $x, y\in G$. So that the function $(x,y,z)\longrightarrow 2f(z)c_{x}f(y)+2f(y)c_{x}f(z)$ is bounded. Since $f$ is unbounded it follows that $c_{x}=0$ for all $x\in G$. As in the proof of Theorem 2.4, we get that $g$ is a solution of functional equation (1.2).\\
Second case: $g$ is unbounded. According to lemma 5.2 we get that
$g$ is a solution of (1.2).
\end{proof}
\begin{lem}
Let $\delta>0$. Let $\mu: G\longrightarrow \mathbb{C}^{*}$ be a
unitary character of $G$. Let  $f, g \in \mathcal{C}(G)$ such that
$f$ is an unbounded solution of inequality (5.1).
 Then
$g$ is a solution of d'Alembert's long functional equation
\begin{equation}
\int_{K}g(xkyk^{-1})dk+\mu(y)\int_{K}g(xky^{-1}k^{-1})dk+\int_{K}g(ykxk^{-1})dk+\end{equation}
$$\mu(y)\int_{K}g(y^{-1}kxk^{-1})dk=4g(x)g(y)$$
for all $x, y\in G$.
\end{lem}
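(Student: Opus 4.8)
The plan is to adapt the exact-case computation in the proof of Theorem 2.4, now keeping track of the $\delta$-errors, and to use that $\mu$ is unitary (so $|\mu(x)|=1$ for all $x$) together with the facts recorded in Lemma 5.1, namely that $g$ is $K$-central and that $g(y)=\mu(y)g(y^{-1})$. Write $\Psi(x,y)=\int_{K}f(xkyk^{-1})dk+\mu(y)\int_{K}f(xky^{-1}k^{-1})dk-2f(x)g(y)$, so that $|\Psi(x,y)|\le\delta$, and let $\Lambda(y,z)$ denote the left-hand side minus the right-hand side of (5.3). The aim is to prove that, for each fixed pair $(y,z)$, the quantity $f(x)\Lambda(y,z)$ stays bounded by a constant independent of $x$; since $f$ is unbounded this forces $\Lambda(y,z)=0$.

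First I would establish two ``double insertion'' identities. Applying (5.1) at the conjugated base point $xkyk^{-1}$ with second argument $z$ and then integrating over $k\in K$ (normalized Haar integration preserves the bound $\delta$), and combining this with the corresponding computation for $y^{-1}$ after multiplying by $\mu(y)$, one gets
$$S_{1}(x;y,z)=2\,[F_{x}(y)+\mu(y)F_{x}(y^{-1})]\,g(z)+O(\delta)=4f(x)g(y)g(z)+O(\delta),$$
where $F_{x}(w)=\int_{K}f(xkwk^{-1})dk$ and $S_{1}$ is the $\mu$-weighted sum of the four double integrals $\int_{K}\int_{K}f(xkyk^{-1}hzh^{-1})dkdh$ and its $z\mapsto z^{-1}$, $y\mapsto y^{-1}$ and doubly inverted companions. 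Inserting $z$ first and $y$ second gives the mirror identity $S_{2}(x;y,z)=4f(x)g(z)g(y)+O(\delta)$. Since $y,z$ are fixed, each $O(\delta)$ term is bounded by a constant (depending on $y,z$) times $\delta$, independently of $x$; here $|\mu|=1$ is used at each multiplication by a $\mu$-factor.

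Next I would expand $2f(x)\Lambda(y,z)$ by applying (5.1) at base point $x$ with the four conjugate products $ykzk^{-1}$, $ykz^{-1}k^{-1}$, $zkyk^{-1}$ and $z^{-1}kyk^{-1}$ as second argument, using that $\mu$ is a character (whence $\mu(ykzk^{-1})=\mu(y)\mu(z)$) and that $g(w)=\mu(w)g(w^{-1})$. This writes $2f(x)\Lambda(y,z)$ as a signed sum of eight double $K$-integrals of $f$ minus $8f(x)g(y)g(z)$, up to an error bounded independently of $x$. The crux is to identify this collection of eight integrals with $S_{1}+S_{2}$. At the level of the underlying group words the required identity is simply $\{xyz,xyz^{-1},xy^{-1}z,xy^{-1}z^{-1}\}\cup\{xzy,xzy^{-1},xz^{-1}y,xz^{-1}y^{-1}\}$ occurring on both sides; at the level of the $K$-integrals it is realized by Haar-invariance substitutions such as $k\mapsto hk$ (together with relabelling $k\leftrightarrow h$), which turn a term like $\int_{K}\int_{K}f(xhykzk^{-1}h^{-1})dhdk$ into $\int_{K}\int_{K}f(x\,hyh^{-1}\,kzk^{-1})dhdk$.

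I expect this matching to be the principal obstacle, and it is also the conceptual heart of the statement. The reassociation is carried out entirely by substitutions inside the compact group $K$, so the Kannappan-type condition $(*)$, which would require $(G,K)$ to be a central pair and is not available here, is never used. This is why the argument goes through for an arbitrary locally compact $G$, and why the symmetric combination collapses to the long equation (5.3) rather than to (1.2): without commutativity the words $yz$ and $zy$ cannot be identified, so both conjugacy averages must be retained. Once one has $2f(x)\Lambda(y,z)=(S_{1}+S_{2})-8f(x)g(y)g(z)+O(\delta)=O(\delta)$ with the $O(\delta)$ bounded in $x$, choosing $(x_{n})$ with $|f(x_{n})|\to+\infty$ and dividing by $|f(x_{n})|$ yields $\Lambda(y,z)=0$ for all $y,z\in G$, as required.
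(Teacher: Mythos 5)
Your proposal is correct and follows essentially the same route as the paper: both multiply the long-equation expression by twice the unbounded function, telescope it into a sum of instances of (5.1) (using Haar-invariance substitutions such as $k\mapsto hk$ to reassociate the double $K$-integrals, and the character property of $\mu$ with $|\mu|=1$), obtain a bound of the form $(8+|g(\cdot)|+|g(\cdot)|)\delta$ independent of the free variable, and divide by $|f|$ along a sequence tending to infinity. Your organization via the two insertion sums $S_{1}$, $S_{2}$ and the placement of the unbounded variable as the first argument of (5.1) are only cosmetic differences from the paper's single triangle-inequality display.
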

\begin{proof} For all $x, y, z\in G$ we have
$$2|f(z)||\int_{K}g(xkyk^{-1})dk+\mu(y)\int_{K}g(xky^{-1}k^{-1})dk+\int_{K}g(ykxk^{-1})dk
$$$$+
\mu(y)\int_{K}g(y^{-1}kxk^{-1})dk-4g(x)g(y)|$$
$$\leq |\int_{K}\int_{K}f(zhxkyk^{-1}h^{-1})dkdh+
\mu(xy)\int_{K}\int_{K}f(zhy^{-1}kx^{-1}k^{-1}h^{-1})dkdh $$$$
-2f(z)\int_{K}g(xkyk^{-1})dk|$$
$$+|\mu(y)\int_{K}\int_{K}f(zhx^{-1}kyk^{-1}h^{-1})dkdh+\mu(x)
\int_{K}\int_{K}f(zhykx^{-1}k^{-1}h^{-1})dkdh$$$$
-2\mu(y)f(z)\int_{K}g(xky^{-1}k^{-1})dk|$$$$+|\int_{K}\int_{K}f(zhykxk^{-1}h^{-1})dkdh+
\mu(yx)\int_{K}\int_{K}f(zhx^{-1}ky^{-1}k^{-1}h^{-1})dkdh $$$$
-2f(z)\int_{K}g(ykxk^{-1})dk|$$
$$+|\mu(y)\int_{K}\int_{K}f(zhy^{-1}kxk^{-1}h^{-1})dkdh+\mu(x)
\int_{K}\int_{K}f(zhx^{-1}kyk^{-1}h^{-1})dkdh$$$$
-2\mu(y)f(z)\int_{K}g(y^{-1}kxk^{-1})dk|$$
$$+\int_{K}\int_{K}f(zhxky^{-1}k^{-1}h^{-1})dkdh+
\mu(y)\int_{K}\int_{K}f(zhxky^{-1}k^{-1}h^{-1})dkdh $$$$
-2\int_{K}f(zkxk^{-1})dkg(y)|$$
$$+\int_{K}\int_{K}f(zhykxk^{-1}h^{-1})dkdh+
\mu(x)\int_{K}\int_{K}f(zhyh^{-1}kx^{-1}k^{-1})dkdh $$$$
-2\int_{K}f(zkyk^{-1})dkg(x)|$$
$$+|\mu(y)\int_{K}\int_{K}f(zhy^{-1}h^{-1}kxk^{-1})dkdh+\mu(y)\mu(x)
\int_{K}\int_{K}f(zhy^{-1}h^{-1}kx^{-1}k^{-1}h^{-1})dkdh$$
$$-2\mu(y)\int_{K}f(zky^{-1}k^{-1})dkg(x)|$$$$
+|\mu(x)\int_{K}\int_{K}f(zhx^{-1}h^{-1}kyk^{-1})dkdh+\mu(y)\mu(x)
\int_{K}\int_{K}f(zhx^{-1}h^{-1}ky^{-1}k^{-1}h^{-1})dkdh$$
$$-2\mu(x)\int_{K}f(zkx^{-1}k^{-1})dkg(y)|$$$$
+2|g(y)||\int_{K}f(zkxk^{-1})dk+\mu(x)\int_{K}f(zkx^{-1}k^{-1})dk-2f(z)g(x)|$$
$$+2|g(x)||\int_{K}f(zkyk^{-1})dk+\mu(y)\int_{K}f(zky^{-1}k^{-1})dk-2f(z)g(y)|$$
$$\leq (8+|g(y)|+|g(x))\delta.$$
Since $f$ is bounded we conclude that $g$ is solution of functional
equation (5.3).
\end{proof}
In the next corollary we assume that $K$ is a discrete subgroup of
$G$
\begin{lem}
Let $\delta>0$. Let $\mu: G\longrightarrow \mathbb{C}$ be a unitary
character of $G$. Let  $f, g \in \mathcal{C}(G)$ such that $f$ is an
unbounded $K$-central solution of the inequality (5.1) such that
$C_{K}f\neq 0$.
 Then
$g$ is a solution of d'Alembert's functional equation (1.2).
\end{lem}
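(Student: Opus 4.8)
The statement to prove is Lemma 5.6: if $f$ is an unbounded $K$-central solution of the inequality (5.1) with $C_K f \neq 0$, and $K$ is discrete, then $g$ solves d'Alembert's equation (1.2). The natural strategy is to deduce this from Lemma 5.4, which already shows that any unbounded $f$ forces $g$ to satisfy the "long" d'Alembert equation (5.3). The remaining work is therefore to upgrade (5.3) to (1.2), and here the hypotheses $f$ is $K$-central and $C_K f \neq 0$ must do the work. My plan is to show that under these hypotheses the two halves of equation (5.3) coincide, i.e. that $\int_K g(xkyk^{-1})dk = \int_K g(ykxk^{-1})dk$ and $\int_K g(y^{-1}kxk^{-1})dk = \int_K g(xky^{-1}k^{-1})dk$, so that (5.3) collapses to exactly twice equation (1.2).

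\textbf{Key steps in order.} First I would invoke Lemma 5.4 to obtain that $g$ satisfies the long equation (5.3). Next I would establish the symmetry relation $\int_K g(xkyk^{-1})dk = \int_K g(ykxk^{-1})dk$ for all $x,y \in G$. The mechanism should parallel Proposition 3.3(ii)/Proposition 3.1(ii): a $K$-central function satisfies such a symmetry, but here $g$ is not assumed $K$-central a priori, so I would instead use that $f$ is $K$-central together with the defining inequality. Concretely, I would run the same kind of $z$-variable amplification used in Lemma 5.3: form the two integrals $\int_K g(xkyk^{-1})dk$ and $\int_K g(ykxk^{-1})dk$, multiply the difference by $f(z)$, and use the inequality (5.1) twice (once in $(x,y)$ and once in $(y,x)$, exploiting that $f$ being $K$-central makes $\int_K f(zkxk^{-1})dk$ symmetric in the relevant sense) to bound $2|f(z)|\,|\int_K g(xkyk^{-1})dk - \int_K g(ykxk^{-1})dk|$ by a constant. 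Since $f$ is unbounded, picking a sequence $z_n$ with $|f(z_n)| \to \infty$ forces the difference to vanish. The discreteness of $K$ and $C_K f \neq 0$ guarantee the relevant translated/averaged functions are again genuine nonzero solutions (via Proposition 2.1 and Lemma 5.1(i)) so the amplification argument does not degenerate.

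\textbf{Finishing.} Once the symmetry $\int_K g(xkyk^{-1})dk = \int_K g(ykxk^{-1})dk$ is in hand, applying it (and its $y\mapsto y^{-1}$ variant, combined with Lemma 5.1(ii), which gives $g(y)=\mu(y)g(y^{-1})$) to the four terms on the left of (5.3) shows that the first and third terms agree and the second and fourth terms agree. Hence the left side of (5.3) equals $2\left[\int_K g(xkyk^{-1})dk + \mu(y)\int_K g(xky^{-1}k^{-1})dk\right]$, while the right side is $4g(x)g(y)$, which is precisely $2$ times equation (1.2). Dividing by $2$ yields that $g$ solves (1.2), as desired.

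\textbf{Main obstacle.} The crux is the symmetry step: proving $\int_K g(xkyk^{-1})dk = \int_K g(ykxk^{-1})dk$ without assuming $(G,K)$ is a central pair (which is where Proposition 3.1 got its symmetry for free). The entire point of the hypotheses "$f$ is $K$-central" and "$C_K f \neq 0$" is to manufacture this symmetry from the approximate equation alone, and carrying out the double-$K$-integral bookkeeping so that the unboundedness of $f$ can be leveraged—while keeping the $\mu$-factors aligned—will be the delicate part. I expect this to require the same careful conjugation manipulations $hk^{-1}\cdots k h^{-1} = k^{-1}\cdots kh\cdots h^{-1}$ seen in the proof of Proposition 2.1, which is where the $K$-centrality of $f$ enters decisively.
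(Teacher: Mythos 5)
Your strategy has a genuine gap at the step you yourself identify as the crux: the symmetry $\int_K g(xkyk^{-1})dk=\int_K g(ykxk^{-1})dk$ cannot be manufactured from these hypotheses by the amplification you describe. To bring $\int_K g(xkyk^{-1})dk$ into play you must apply (5.1) at the point $(z,\,xkyk^{-1})$ and integrate over $k$, which produces the $f$-terms $\int_K\int_K f(zhxkyk^{-1}h^{-1})dkdh$ and $\mu(xy)\int_K\int_K f(zhky^{-1}k^{-1}x^{-1}h^{-1})dkdh$; doing the same at $(z,\,ykxk^{-1})$ produces $\int_K\int_K f(zhykxk^{-1}h^{-1})dkdh$ and $\mu(xy)\int_K\int_K f(zhkx^{-1}k^{-1}y^{-1}h^{-1})dkdh$. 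Bounding $2|f(z)|\bigl|\int_K g(xkyk^{-1})dk-\int_K g(ykxk^{-1})dk\bigr|$ therefore requires these pairs of $f$-integrals to (nearly) cancel, and that amounts to transposing $x$ and $y$ inside the argument of $f$, i.e.\ to the Kannappan-type condition $(*)$. $K$-centrality of $f$ only lets you move elements of $K$ from one end of the argument to the other; it never lets you swap $x$ and $y$, and $(*)$ is guaranteed only for central pairs --- exactly the hypothesis this lemma (and Theorem 2.3 before it) is designed to avoid. The same obstruction blocks the companion identity $\int_K g(xky^{-1}k^{-1})dk=\int_K g(y^{-1}kxk^{-1})dk$ needed to collapse (5.3) to (1.2); using $g=\mu\check g$ together with the $K$-centrality of $g$ merely converts one of these two identities into the other, so that route is circular. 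The elaborate case analysis in the proof of Theorem 2.3 exists precisely because this symmetry is not available.

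The paper's proof runs along an entirely different line and does not invoke Lemma 5.4 at all: everything is reduced to Lemma 5.3 (the case where $x\mapsto f(x)+\mu(x)f(x^{-1})$ is bounded). If $f(e)=0$, setting $x=e$ in (5.1) shows that $y\mapsto C_Kf(y)+\mu(y)C_Kf(y^{-1})$ is bounded and Lemma 5.3 applies. If $f(e)\neq 0$ (normalized to $1$), one forms $h=C_K(L_af)-C_K(L_af)(e)C_Kf$; either some such $h$ is unbounded, in which case $h(e)=0$ forces $x\mapsto h(x)+\mu(x)h(x^{-1})$ to be bounded and Lemma 5.3 applies to $h$, or all such $h$ are bounded, in which case a $z$-amplification shows $f$ is exactly $K$-spherical, whence $g=\frac{f+\mu\check f}{2}$ and Proposition 2.1 ii) yields (1.2). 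To salvage your route you would need an independent proof of the symmetry, which in effect means redoing the work of Theorem 2.3 in the approximate setting; the paper's case analysis is the substitute for that.
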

\begin{proof}
Since $(f, g)$ is a solution of the inequality (5.1) we get
according to lemma 5.1 that $(C_{K}f,g)$ is also a solution of
(5.1). By setting $x=e$ in (5.1) and by the fact that $C_{K}f$ is
$K$-central it follows that
$$|C_{K}f(y)+\mu(y)C_{K}f(y^{-1})-2f(e)g(y)|\leq \delta, \; y\in G.$$
If $f(e)=0$ we get that the function $y\longrightarrow C_{K}f(y)+\mu(y)C_{K}f(y^{-1})$ is bounded. According to Lemma 5.3 we have that $g$ is a solution of d'Alembert's functional equation (1.2).\\
If $f(e)\neq 0$. Replacing $C_{K}f$ by $\frac{C_{K}f}{f(e)}$ we may assume that $f(e)=1$. Consider the function $C_{K}(L_{a}f)(x)=\int_{K}f(akxk^{-1})d\omega_K(k)$ for all $x\in G$. According to lemma 5.1 we get that $(C_{K}(L_{a}f),g)$  is a solution of (5.1). Let $a\in G$ such that $$h=C_{K}(L_{a}f)-C_{K}(L_{a}f)(e)C_{K}f.$$ If there exists $a\in G$ such that $h$ is unbounded on $G$. Since $h(e)=0$ and $C_{K}h=h$ and that the function $h$ is a solution of the inequality (5.1) it follows that $x\longrightarrow h(x)+\mu(x)h(x^{-1})$ is bounded. According to lemma 5.3 we get that $g$ is a solution d'Alembert's functional equation (1.2).\\
Now assume that $h$ is bounded, that is there exists $M(x)>0$ such
that $$|\int_{K}f(xkyk^{-1})dk-f(x)C_{K}f(y)|\leq M(x)$$ for all
$x,y\in G$. Since $f$ is $K$-central we get for all $x, y\in G$ that
$$|\int_{K}f(xkyk^{-1})dk-f(x)f(y)|\leq M(x).$$ By using triangle inequality we get for all $x, y, z\in G$ that
$$|f(z)||\int_{K}f(xkyk^{-1})dk-f(x)f(y)|$$
$$\leq |-\int_{K}\int_{K}f(xkyk^{-1}hzh^{-1})kdh+\int_{K}f(xkyk^{-1})dkf(z)|$$
$$+|\int_{K}\int_{K}f(xkyk^{-1}hzh^{-1})dkdh-f(x)\int_{K}f(ykzk^{-1})dk|$$
$$+|f(x)||\int_{K}f(ykzk^{-1})dk-f(y)f(z)|$$
$$\leq \int_{K}M(xkyk^{-1})dk+M(x)+|f(x)|M(y)|.$$ Since $f$ is unbounded it follows that $\int_{K}f(xkyk^{-1})dk-f(x)f(y)$ for all $x, y\in G$. Substituting this result into inequality (5.1) it follows that $$|f(x)||f(y)+\mu(y)f(y^{-1})-2g(y)|\leq \delta$$ for all $x,y \in G$. Since $f$ is unbounded we get that $g(y)=\frac{f(y)+\mu(y)f(y^{-1})}{2}$ for all $y\in G$. According to Proposition 2.1 we get that $g$ is a solution of d'Alembert's functional equation (1.2).
\end{proof}
The next theorem is the main result of this section
\begin{thm} Let $\delta>0$ be fixed, $\mu$ be a unitary character of $G$ and let  $f, g :G\longrightarrow \mathbb{C}$ such that $(f,g)$ satisfies (5.1) and $f$ is $K$-central.
 Then \\ 1) $f, g$ are bounded or\\
2) $f$ is unbounded and $g$ satisfies d'Alembert's  functional
equation (1.2) or\\ 3) $g$  is unbounded and f satisfies the
functional equation (1.1) (if $f\neq 0$  such that $C_{K}f\neq 0$,
then g satisfies the d'Alembert's  functional equation (1.2)).
\end{thm}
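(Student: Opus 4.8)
The plan is to read the conclusion as a trichotomy driven purely by boundedness and to feed each branch into the lemmas already proved in this section. If both $f$ and $g$ are bounded we are in alternative (1) with nothing left to prove, so assume at least one of them is unbounded; the two remaining branches are ``$f$ unbounded'' and ``$g$ unbounded.'' These are not mutually exclusive, but since the statement is a disjunction it is enough to supply the asserted conclusion in each branch separately.

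First I would dispose of the case in which $f$ is unbounded. Here the standing hypothesis that $f$ is $K$-central does the work at once: $K$-centrality means $C_{K}f=f$, and unboundedness forces $f\neq 0$, so $C_{K}f=f\neq 0$. Thus $f$ is an unbounded $K$-central solution of (5.1) with $C_{K}f\neq 0$, which is exactly the hypothesis of Lemma 5.5, and that lemma returns that $g$ solves d'Alembert's equation (1.2). (The discreteness of $K$ mentioned just before Lemma 5.5 is not actually used in its argument, so it is harmless here.) This is alternative (2).

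Next I would treat the case in which $g$ is unbounded and aim at alternative (3). Unboundedness of $g$ already gives, through Lemma 5.2, that $g$ is a solution of (1.2); the real task is to promote the inequality (5.1) to the exact Wilson identity (1.1) for $f$. For this I would run a Baker-type superstability argument. Put
\[
\Psi(x,y)=\int_{K}f(xkyk^{-1})dk+\mu(y)\int_{K}f(xky^{-1}k^{-1})dk-2f(x)g(y),
\]
so that $\abs{\Psi(x,y)}\le \delta$ by (5.1), and try to show $\Psi\equiv 0$. The mechanism is to expand $4f(x)g(y)g(z)$ by means of the d'Alembert identity (1.2) now available for $g$, and then, wherever a factor $2f(x)g(w)$ appears, to replace it by the corresponding $K$-integral of $f$ at the cost of an error of size at most $\delta$ (invoking $\abs{\Psi}\le\delta$ once more, and $\abs{\mu}=1$ since $\mu$ is unitary). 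Collecting the resulting double $K$-integrals exactly as in the computation carried out in the proof of the main theorem of Section 2 (Theorem 2.3, whose rearrangements do not require the condition ($*$)), and using the properties $g(y)=\mu(y)g(y^{-1})$ and the $K$-centrality of $g$ from Lemma 5.1 together with $\mu(kzk^{-1})=\mu(z)$, one is left with an identity of the form $2g(z)\Psi(x,y)=R(x,y,z)$ in which, for each fixed pair $(x,y)$, the remainder $R$ is bounded as $z$ ranges over $G$. Since $g$ is unbounded this forces $\Psi(x,y)=0$ for all $x,y$; that is, $(f,g)$ solves (1.1). Finally, if in addition $f\neq 0$ and $C_{K}f\neq 0$, then $(f,g)$ is a genuine solution of Wilson's equation (1.1) with $C_{K}f\neq 0$, so Theorem 2.3 applies and yields that $g$ solves (1.2), which is the parenthetical claim (in agreement with what Lemma 5.2 already gives).

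The one genuinely delicate step is the superstability computation in the $g$-unbounded branch: isolating $2g(z)\Psi(x,y)$ modulo a $z$-bounded remainder demands careful bookkeeping of the many double integrals $\int_{K}\int_{K}f(\cdots)\,dk\,dh$ and their $\mu$-weights, checking that every term not proportional to $g(z)\Psi(x,y)$ is either $O(\delta)$ or bounded independently of $z$. This is precisely where the $K$-centrality of $f$, the symmetry $g(y)=\mu(y)g(y^{-1})$ and $K$-centrality of $g$ from Lemma 5.1, and the identity $\mu(kzk^{-1})=\mu(z)$ all get used; by contrast the $f$-unbounded branch and the fully bounded branch are immediate once Lemma 5.5 is in hand.
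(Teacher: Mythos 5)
Your proposal is correct and follows essentially the same route as the paper: the bounded case is trivial, the $f$-unbounded case is dispatched by Lemma 5.5 exactly as you describe (the paper cites Lemmas 5.1, 5.3 and 5.5 together, the first two being the ingredients of the third), and the $g$-unbounded case is the same Baker-type estimate bounding $2\abs{g(z)}\abs{\Psi(x,y)}$ by a quantity independent of $z$ (the paper gets $\delta(8+2\abs{g(y)})$), letting $\abs{g(z_n)}\to\infty$ to force $\Psi\equiv 0$, and then invoking Theorem 2.3 for the parenthetical claim. No gaps.
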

\begin{proof}
We get 1) by easy computations.\\
2) Assume that $f$ is unbounded.  According to Lemmas 5.1, 5.3 and 5.5 we get the proof.\\
3) Assume that $g$ is unbounded, then for $f=0$ the pair $(f,g)$ is
a solution of equation (5.1). Afterward we suppose that $f\neq 0$.
By using (5.1) and the following decomposition \small
\begin{eqnarray*}
&&2|g(z)||\int_{K}f(xkyk^{-1})dk+\mu(y)\int_{K}f(xky^{-1}k^{-1})dk-2f(x)g(y)|\\&=&
|-2g(z)\int_{K}f(xkyk^{-1})dk-2g(z)\mu(y)\int_{K}f(xky^{-1}k^{-1})dk-4g(z)f(x)g(y)|\\
&\leq &|\int_{K}\int_{K}f(xkyk^{-1}hzh^{-1})dkdh+\mu(z)
\int_{K}\int_{K}f(xkyk^{-1}hz^{-1}h^{-1})dkdh|\\&-&2\int_{K}f(xkyk^{-1})g(z)dk
\\&+& |\mu(y)\int_{K}\int_{K}f(xky^{-1}k^{-1}hzh^{-1})dkdh+
\mu(y)\mu(z)\int_{K}\int_{K}f(xky^{-1}k^{-1}hz^{-1}h^{-1})dkdh\\&-&2\mu(y)
\int_{K}f(xky^{-1}k^{-1})g(z)dk|\\
&+&|\int_{K}\int_{K}f(xhykzk^{-1}h^{-1})dkdh
+\mu(yz)\int_{K}\int_{K}f(xhkz^{-1}k^{-1}y^{-1}h^{-1})dkdh\\
&-&2f(x)\int_{K}g(ykzk^{-1})dk|\\&+&
|\mu(z)\int_{K}\int_{K}f(xhkyk^{-1}z^{-1}h^{-1})dkdh+
\mu(z)\mu(yz^{-1})\int_{K}\int_{K}f(xkzk^{-1}hy^{-1}h^{-1})dkdh\\&-&2\mu(z)
\int_{K}f(x)g(ykz^{-1}k^{-1})dk|\\&+&
|\mu(z)\int_{K}\int_{K}f(xhz^{-1}kyk^{-1}h^{-1})dkdh+
\mu(z)\mu(z^{-1}y)\int_{K}\int_{K}f(xhky^{-1}k^{-1}zh^{-1})dkdh\\&-&2\mu(z)
\int_{K}f(x)g(z^{-1}kyk^{-1})dk|\\&+&
|\int_{K}\int_{K}f(xhzkyk^{-1}h^{-1})dkdh+
\mu(zy)\int_{K}\int_{K}f(xhky^{-1}k^{-1}z^{-1}h^{-1})dkdh
\\&-&2f(x)\int_{K}g(zkyk^{-1})dk|\\&+&
|\mu(z)\int_{K}\int_{K}f(xhz^{-1}kyk^{-1}h^{-1})dkdh+
\mu(z)\mu(y)\int_{K}\int_{K}f(xkz^{-1}k^{-1}hy^{-1}h^{-1})dkdh\\&-&2\mu(z)
\int_{K}f(xkz^{-1}k^{-1})g(y)dk|\\&+&
|\int_{K}\int_{K}f(xkzk^{-1}hyh^{-1})dkdh+
\mu(y)\int_{K}\int_{K}f(xkzk^{-1}hy^{-1}h^{-1})dkdh
\\&-&2\int_{K}f(xkzk^{-1})g(y)dk|\\&+&
2|f(x)||\int_{K}g(ykzk^{-1})dk+\mu(z)\int_{K}g(ykz^{-1}k^{-1})dk
+\int_{K}g(zkyk^{-1})dk\\&+&\mu(z)\int_{K}g(z^{-1}kyk^{-1})dk-4g(y)g(z)|\\&+&
2|g(y)||\int_{K}f(xkzk^{-1})dk+\mu(z)\int_{K}f(xkz^{-1}k^{-1})dk-2f(x)g(z)|\\&&\leq
\delta+|\mu(y)|\delta+\delta+2|\mu(z)|\delta+\delta+|\mu(z)|\delta+\delta+2|f(x)|\times 0+2|g(y)|\delta\\
&=&\delta(8+2|g(y)|).
\end{eqnarray*}
Since $g$ is unbounded it follows that $f,g$ satisfy the functional
equation (1.1). According to Theorem 2.4 we get the remainder
\end{proof}
As a consequence we get the superstability of the functional
equations (1.2), (1.3), (1.4) and (1.7)
\begin{cor}
Let $\delta>0$ be fixed, $\mu$ be a unitary character of $G$. Let
$f: G\longrightarrow \mathbb{C}$ such that
\begin{equation}\label{eq31}
 |\int_{K}f(xkyk^{-1})dk+\mu(y)\int_{K}f(xky^{-1}k^{-1})dk-2f(x)f(y)|\leq \delta,\;\;x,y\in G.
\end{equation}
Then either $f$ is bounded or $f$ is a solution of the functional
equation (1.2).
\end{cor}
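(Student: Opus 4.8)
The plan is to recognize that the inequality assumed in the corollary is exactly the diagonal case $g=f$ of the general stability inequality (5.1): substituting $g=f$ in (5.1) reproduces the hypothesis verbatim, so the pair $(f,f)$ is a solution of (5.1) with the same constant $\delta$, and the unitarity of $\mu$ assumed here matches the hypothesis of the lemmas of this section. From here I would simply split on the boundedness of $f$.

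If $f$ is bounded there is nothing to prove: this is the first alternative of the statement. Suppose instead that $f$ is unbounded. Then, setting $g:=f$, the function $g$ is an unbounded solution of (5.1) paired with $f$; in other words $(f,g)=(f,f)$ satisfies exactly the hypotheses of Lemma 5.2. Applying that lemma yields at once that $g=f$ solves d'Alembert's functional equation (1.2), which is the remaining alternative.

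Thus the corollary follows as a direct specialization, and the only verification genuinely required is the (immediate) identification of the corollary's inequality with the $g=f$ instance of (5.1). The entire substantive work --- upgrading an approximate d'Alembert identity to an exact one when the unbounded function occupies the right-hand product --- has already been discharged in Lemma 5.2, and behind it in Theorem 2.4. Consequently there is no real obstacle at this level. The one mild subtlety worth flagging is the choice of which earlier result to invoke: Theorem 5.6 would also deliver the conclusion through its unbounded-$f$ branch, but it presupposes that $f$ is $K$-central, an assumption absent from the corollary, whereas Lemma 5.2 needs no centrality hypothesis and therefore applies directly to the diagonal pair $(f,f)$. For this reason I would route the proof through Lemma 5.2 rather than through Theorem 5.6.
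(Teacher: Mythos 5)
Your proof is correct, but it does not follow the route the paper intends. The paper states this corollary immediately after Theorem 5.6 with the phrase ``as a consequence,'' so the intended derivation is through that theorem's unbounded-$f$ branch; you instead specialize inequality (5.1) to the diagonal pair $(f,f)$ and invoke Lemma 5.2 directly. Your choice is well motivated: as you observe, Theorem 5.6 carries the hypothesis that $f$ is $K$-central, which the corollary does not assume, so the paper's implicit derivation has a small gap. That gap is repairable --- one can adapt the argument of Proposition 3.3 ii) to the approximate setting: integrating the inequality over $K$ and using the translation invariance of the Haar measure gives $\abs{2f(x)}\cdot\abs{\int_{K}f(kyk^{-1})dk-f(y)}\leq 2\delta$, whence unboundedness of $f$ forces $C_{K}f=f$, i.e.\ $f$ is automatically $K$-central and Theorem 5.6 then applies --- but your route through Lemma 5.2 avoids this extra step entirely, since that lemma places no centrality hypothesis on either function and only requires that the function occupying the $g$-slot be unbounded, which is exactly what the diagonal substitution delivers. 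The only cost of your approach is that it leans on Lemma 5.2, whose proof the paper outsources to a citation, whereas the Theorem 5.6 route rests on arguments carried out in the paper itself; but as a logical matter your argument is complete and, if anything, tighter than the paper's.
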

Let $f(kxh)=\chi(k)f(x)\chi(h)$, $k, h\in K$ and $x\in G$. Then we
have the following corollary
\begin{cor}
Let $\delta>0$ be fixed, $\mu$ be a bounded character of $G$. Let
$f: G\longrightarrow \mathbb{C}$ such that
\begin{equation}\label{eq31}
 |\int_{K}f(xky)\overline{\chi(k)}dk+\mu(y)\int_{K}f(xky^{-1})\overline{\chi(k)}dk-2f(x)f(y)|\leq \delta,\;\;x,y\in G.
\end{equation}
Then either $f$ is bounded or $f$ is a solution of the functional
equation (1.7).
\end{cor}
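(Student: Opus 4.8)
The final statement is Corollary 5.8, which specializes the superstability theorem (Theorem 5.6) to functions satisfying the bi-$K$-covariance condition $f(kxh)=\chi(k)f(x)\chi(h)$ for $k,h\in K$ and $x\in G$. The plan is to reduce inequality (5.5) to the form of inequality (5.1) so that Theorem 5.6 (or Corollary 5.7) applies directly, and then to translate the conclusion back into the setting of equation (1.7). First I would observe that the hypothesis $f(kxh)=\chi(k)f(x)\chi(h)$ allows one to rewrite the twisted integrals appearing in (5.5). Indeed, since $\chi$ is a unitary character of $K$ and $dk$ is normalized Haar measure, a change of variables $k\mapsto k'$ inside each integral converts the one-sided twisted convolutions $\int_K f(xky)\overline{\chi(k)}\,dk$ into the conjugation-type integrals $\int_K f(xk\,y\,k^{-1})\,dk$ used throughout the paper, up to the covariance factor on the right. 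The key computation is to verify that under $f(kxh)=\chi(k)f(x)\chi(h)$ the expression $\int_K f(xky)\overline{\chi(k)}\,dk$ equals $\int_K f(xkyk^{-1})\,dk$, which follows by inserting $\chi(k^{-1})\chi(k)=1$ and using $f(\cdot\,k^{-1})=\chi(k^{-1})f(\cdot)=\overline{\chi(k)}f(\cdot)$.

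Once this identification is made, inequality (5.5) becomes exactly inequality (5.4) of Corollary 5.7 (equivalently, the diagonal case $f=g$ of (5.1)). At that point I would invoke Corollary 5.7 to conclude that either $f$ is bounded, or $f$ is a solution of the functional equation (1.2) in its conjugation form $\int_K f(xkyk^{-1})\,dk+\mu(y)\int_K f(xky^{-1}k^{-1})\,dk=2f(x)f(y)$. The final step is to undo the change of variables: using the covariance $f(kxh)=\chi(k)f(x)\chi(h)$ once more, the conjugation-form equation (1.2) is equivalent to the twisted equation (1.7) displayed in the introduction, namely $\int_K f(xky)\overline{\chi(k)}\,dk+\mu(y)\int_K f(xky^{-1})\overline{\chi(k)}\,dk=2f(x)g(y)$ with $g=f$. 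Thus boundedness of $f$ is the only alternative to $f$ solving (1.7), which is the assertion of the corollary.

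The main obstacle I anticipate is purely bookkeeping rather than conceptual: one must check carefully that the covariance hypothesis is strong enough to collapse the \emph{one-sided} twisted integral into the \emph{two-sided} conjugation integral, and that no measure-theoretic subtlety arises from the compactness of $K$ and the normalization of $dk$. In particular, the interplay between $\chi$ being unitary (so that $\overline{\chi(k)}=\chi(k)^{-1}=\chi(k^{-1})$) and $\mu$ being a bounded character must be tracked so that every factor $\mu(y)$ and $\overline{\chi(k)}$ lands in the correct place. I expect that once these substitutions are verified, the hypotheses of Corollary 5.7 are met verbatim and the conclusion transfers with no further work, since Theorem 5.6 already handles the dichotomy between boundedness and exact solution. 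No genuinely new estimate is required beyond those already established in Section 5.
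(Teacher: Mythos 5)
Your reduction is correct and is exactly the route the paper intends: the right covariance $f(xh)=f(x)\chi(h)$ gives $f(xkyk^{-1})=\overline{\chi(k)}f(xky)$ (and likewise with $y^{-1}$), so inequality (5.5) coincides with (5.4) and equation (1.7) with (1.2), after which Corollary 5.7 (equivalently Theorem 5.6, whose $K$-centrality hypothesis is automatic from the two-sided covariance) yields the dichotomy. The paper states this corollary without proof as a consequence of Theorem 5.6, so your argument supplies precisely the omitted verification.
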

In the next corollary we assume that $K\subset Z(G)$. Then we get
\begin{cor}
Let $\delta>0$ be fixed, $\mu$ be a unitary character of $G$. Let $f
: G\longrightarrow \mathbb{C}$ such that
\begin{equation}\label{eq31}
 |f(xy)+\mu(y)f(xy^{-1})-2f(x)f(y)|\leq \delta,\;\;x,y\in G.
\end{equation}
Then either $f$ is bounded or $f$ is a solution of the functional
equation (1.3).
\end{cor}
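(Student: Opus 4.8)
The plan is to recognize the assumed inequality as the special case of the hypothesis of Corollary 5.7 that arises when $K\subset Z(G)$, so that the conclusion follows by specialization rather than by a fresh argument. The key point is that the inclusion $K\subset Z(G)$ trivializes the conjugations occurring inside the integral averages.

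First I would note that for every $k\in K$ and all $x,y\in G$ centrality gives $kyk^{-1}=y$ and $ky^{-1}k^{-1}=y^{-1}$, so that $xkyk^{-1}=xy$ and $xky^{-1}k^{-1}=xy^{-1}$ do not depend on $k$. Since $dk$ is the normalized Haar measure of $K$, we have $\int_{K}dk=1$, and therefore the averages collapse to point values:
$$\int_{K}f(xkyk^{-1})\,dk=f(xy),\qquad \int_{K}f(xky^{-1}k^{-1})\,dk=f(xy^{-1}).$$
Substituting these two identities into the left-hand side of the inequality in the hypothesis of Corollary 5.7 shows that, in the present central situation, that hypothesis reads exactly as the assumed inequality $\abs{f(xy)+\mu(y)f(xy^{-1})-2f(x)f(y)}\le\delta$. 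Hence $f$ satisfies the hypothesis of Corollary 5.7.

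I would then apply Corollary 5.7 directly: either $f$ is bounded, in which case we are done, or $f$ is a solution of the functional equation (1.2). In the latter case I would use the same collapse of the integral averages once more; inserting the two displayed identities into (1.2) turns it into $f(xy)+\mu(y)f(xy^{-1})=2f(x)f(y)$ for all $x,y\in G$, which is precisely equation (1.3), as already observed in the Introduction. This yields the stated dichotomy.

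I do not anticipate a genuine obstacle, since the argument is a formal specialization of an already-established corollary; the only detail that must be checked is the normalization $\int_{K}dk=1$, which is exactly what makes a constant-in-$k$ integrand integrate to its own value. One could equally route the proof through the main Theorem 5.6 with $f=g$: when $K\subset Z(G)$ every function is automatically $K$-central, since $f(kx)=f(xk)$ holds trivially, so the $K$-centrality hypothesis of that theorem is met as well; but invoking Corollary 5.7 is the most economical path.
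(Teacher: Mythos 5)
Your proposal is correct and matches the paper's (implicit) argument: the paper states this corollary without proof as an immediate specialization of Corollary 5.7, relying on exactly the collapse $\int_{K}f(xkyk^{-1})\,dk=f(xy)$ and $\int_{K}f(xky^{-1}k^{-1})\,dk=f(xy^{-1})$ that you spell out, together with the normalization of the Haar measure on $K$. Your remark that every function is automatically $K$-central when $K\subset Z(G)$ is also the right observation for why the centrality hypothesis upstream causes no difficulty.
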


\vspace{1cm}
Belaid Bouikhalene\\ Departement of Mathematics and Informatics\\
Polydisciplinary Faculty, Sultan Moulay Slimane university, Beni Mellal, Morocco.\\
E-mail : bbouikhalene@yahoo.fr.\\\\
 Elhoucien Elqorachi, \\Department of Mathematics,
\\Faculty of Sciences, Ibn Zohr University, Agadir,
Morocco,\\
E-mail: elqorachi@hotmail.com\\\\

\end{document}